\newtheorem{theorem}{Theorem}[section]
\newtheorem{lemma}[theorem]{Lemma}
\newtheorem{proposition}[theorem]{Proposition}
\newtheorem{corollary}[theorem]{Corollary}
\newtheorem{definition}[theorem]{Definition}
\newtheorem{example}[theorem]{Example}
\newtheorem{remark}[theorem]{Remark}
\numberwithin{equation}{section}
\def\im{{\rm im}}
\def\hom{{\rm Hom}}
\date{}
\title{\bf{Weak Factorization Systems and Fibrewise Regular Injectivity for Actions of Po-monoids on Posets}}
\author{{\bf Farideh Farsad}\and {\bf Ali Madanshekaf } \\
Department of Mathematics\\Faculty of Mathematics, Statistics and Computer Science\\
Semnan University\\ P. O. Box 35131-19111\\
Semnan\\
Iran\\ emails: faridehfarsad@yahoo.com\\
\qquad amadanshekaf@semnan.ac.ir}
\date{}
\begin{document}
\maketitle
\begin{abstract}
Let $S$ be a pomonoid. In this paper, {\bf Pos}-$S$, the category
of $S$-posets and $S$-poset maps, is considered. One of the main
aims of this paper is to draw attention to the notion of  weak
factorization systems in  {\bf Pos}-$S.$ We show that if the
identity element of $S$ is the bottom element, then
$(\mathcal{C_D}, \mathcal{E_S})$ is a weak factorization system in
{\bf Pos}-$S,$ where $\mathcal{C_D}$ and $\mathcal{E_S}$ are the
class of down-closed embedding $S$-poset maps and  the class of
all split $S$-poset epimorphisms, respectively. Among other
things, we use a fibrewise notion of complete posets in the
category {\bf Pos}-$S/B$ under a particular case where $B$ has
trivial action. We get a necessary condition for regular
injective objects in {\bf Pos}-$S/B$. Finally, we characterize
them under a spacial case, where $S$ is, a pogroup and conclude $(Emb,
Top)$ is a weak factorization system in {\bf Pos}-$S$. 
\end{abstract}
AMS {\it subject classification}: 06F05, 18A32, 18G05, 20M30, 20M50. \\
{\it key words}: $S$-poset, slice category, regular injectivity,
weak factorization system.
\section{Introduction}
A comma category (a special case being a slice category) is a
construction in category theory. It provides another way of
looking at morphisms: instead of simply relating objects of a
category to one another, morphisms become objects in their own
right. This notion was introduced in 1963 by F. W. Lawvere,
although the technique did not become generally known until many
years later.

Injective objects with respect to a class $\mathcal{H}$ of
morphisms have been investigated for a long time in various categories.
Recently, injective objects in
slice categories ($\mathbf{C}/B$) have been investigated in detail
(see \cite{adam2, C.M}),
especially in relationship with weak factorization systems, a concept
used in homotopy theory, in particular for model categories.
More precisely, $\mathcal{H}$-injective objects in $\mathbf{C}/B$,
for any $B$ in $\mathbf{C}$,
form the right part of a weak factorization system that has morphisms
of $\mathcal{H}$ as the left part (see~\cite{adam2, B.R}).

In this paper, the notion of weak factorization system in {\bf
Pos}-$S$ is investigated. After some introductory notions in
section 1, we introduce in section 2, the notion of  weak
factorization system and state some related basic theorems. Also,
we give the guarantee about the existence of $(Emb, Emb^{\Box})$
as a weak factorization system in {\bf Pos}-$S$, where $Emb$ is
the class of all order-embeddings. We then find that every
$Emb$-injective object in {\bf Pos}-$S/B$ is split epimorphism. In
section 3, we continue studying $Emb$-injectivity using a
fibrewise notion of complete posets in the category {\bf
Pos}-$S/B$ under a particular case where $B$ has trivial action.

For the rest of this section, we give some preliminaries which we
will need in the sequel.

Given a category $\mathbf{C}$ and an object $B$ of $\mathbf{C}$,
one can construct the {\it slice category} $\mathbf{C}/B$ (read:
$\mathbf{C}$ over $B$): objects of $\mathbf{C}/B$ are morphisms of
$\mathbf{C}$ with codomain $B$, and morphisms in $\mathbf{C}/B$
from one such object $f: A\to B$ to another $g: C\to B$ are
commutative triangles in $\mathbf{C}$
$$
\xymatrix{A\ar[rr]^{h}\ar[dr]_{f}& &C\ar[dl]^{g}\\&B&}
$$
i.e, $gh=f$. The composition in $\mathbf{C}/B$ is defined from
the composition in $\mathbf{C}$, in the obvious way (paste
triangles side by side).

Let $\mathbf{C}$ be a category and $\mathcal{H}$ a class of its
morphisms. An object $I$ of  $\mathbf{C}$ is called
$\mathcal{H}$-{\it injective} if for each $\mathcal{H}$-morphism
$h :U\to V$ and morphism $u : U\to I$ there exists a morphism $s:
V\to I$ such that $sh = u$. That is, the following diagram is
commutative:
$$\xymatrix{U\ar[d]_{h}\ar[r]^{u} &I\\\ar@{-->}[ur]_{s} V &}$$
In particular, in the slice category $\mathbf{C}/B$, this means that,
$f : X\to B$ is $\mathcal{H}$-injective if, for any commutative diagram
$$
\xymatrix{U\ar[r]^{u}\ar[d]_{h} &X\ar[d]^f\\V\ar[r]_{v}&B}
$$
with $h\in \mathcal{H}$, there exists an arrow $s: V\to X$ such that $sh=u$ and $fs=\upsilon$.
$$
\xymatrix{U\ar[r]^{u}\ar[d]_{h}
&X\ar[d]^f\\V\ar@{-->}[ur]_{s}\ar[r]_{v}&B}
$$
The category $\mathbf{C}$ is said to have enough $\mathcal{H}$-injectives if
for every object $A$ of $\mathbf{C}$ there exists a morphism $A\to C$
in $\mathcal{H}$ where $C$ is an $\mathcal{H}$-injective object in $\mathbf{C}$.

Let $S$ be a monoid with identity 1. A (\emph{right})
$S$-\emph{act} or $S$-{\it set} is a set $A$ equipped with an
action $\mu: A\times S\rightarrow A,$ $(a,s)\mapsto as,$ such that
$a1=a$ and $a(st)=(as)t,$ for all $a\in A$ and $s,t\in S.$ Let
{\bf Act-$S$} denote the category of all $S$-acts with
action-preserving maps or $S$-maps. Clearly $S$ itself is an
$S$-act with its operation as the action. For instance, take any
monoid $S$ and a non-empty set $A$. Then $A$ becomes a right
$S$-act by defining $as = a$ for all $a\in A$, $s\in S$, we call
that $A$ an $S$-act with {\it trivial action} (see~\cite{K.K.M} or
\cite{M.T}).

$\bullet$ A poset is said to be {\it complete} if each of its
subsets has an infimum and a supremum.

Recall that a  {\it pomonoid} is a monoid with a partial order
$\leq$ which is compatible with the monoid operation: for $s,t,
s',t'\in S$, $s\leq t$, $s'\leq t'$ imply $ss'\leq tt'$. Now, let
$S$ be a pomonoid. A (\emph{right}) $S$-{\it poset} is a poset $A$
which is also an $S$-act whose action $\mu : A\times S\rightarrow
A$ is order-preserving, where $A\times S$ is considered as a poset
with componentwise order. The category of all $S$-posets with
action preserving monotone maps between them is denoted by {\bf
Pos}-$S$. Clearly $S$ itself is an $S$-poset with its operation as
the action. Also, if $B$ is a non-empty subposet of $A,$ then $B$
is called a {\it sub $S$-poset} of $A$ if $bs\in B$ for all $s\in
S$ and $b\in B$. Throughout this paper we deal with the pomonoid
$S$ and the category {\bf Pos}-$S$, unless otherwise stated. For
more information on $S$-posets see~\cite{F.M} or~\cite{F}.
\section{Weak Factorization System}
The concept of weak factorization systems plays an important role
in the theory of model categories. Formally, this notion
generalizes factorization systems by weakening the unique–
diagonalization property to the diagonalization property without
uniqueness. However, the basic examples of weak factorization
systems are fundamentally different from the basic examples of
factorization systems.

Now, we introduce from~\cite{adam2} the notion which we deal with in the paper.

{\bf Notation}: We denote by $\Box$ the relation {\it
diagonalization property} on the class of all morphisms of a category $\mathbf{C}$:
given morphisms $l: A\to B$ and $r: C\to D$ then
$$l\Box r$$
means that in every commutative square
$$
\xymatrix{A\ar[r]\ar[d]_{l}
&C\ar[d]^r\\B\ar@{-->}[ur]^{d}\ar[r]&D}
$$
there exists a diagonal $d: B\to C$ rendering both triangles
commutative. In this case $l$ is also said to have the {\it left
lifting property} with respect to $r$ (and $r$ to have the {\it
right lifting property} with respect to $l$).

Let $\mathcal H$ be a class of morphisms. We denote by
$${\mathcal{H}}^{\Box}=\{r | \ r\text{ has the right lifting property with
respect to each }l\in{\mathcal H}\}$$
and
$$~^{\Box}{\mathcal{H}} = \{ l | \ l \text{ has the left lifting
property with respect to each } r\in{\mathcal H}\}.$$
Let $\mathcal{H}_B$ be the class of those morphisms in
$\mathbf{C}/B$ whose underlying morphism in $\mathbf{C}$ lies in
$\mathcal{H}$. Now, $r: A\to B\in\mathcal{H}^{\Box}$ if and only
if $r$ is an $\mathcal{H}_B$-injective object in $\mathbf{C}/B$.
Dually, all morphisms in $~^{\Box}{\mathcal{H}}$ are
characterized by a projectivity condition in $\mathcal{H}_B$.

Recall from \cite{adam2} that a {\it weak factorization system} in a category is a pair
$(\mathcal{L},\mathcal{R})$ of morphism classes such that\\
(1) every morphism has a factorization as an
$\mathcal{L}$-morphism followed by an $\mathcal{R}$-morphism.\\
(2) $\mathcal{R}=\mathcal{L}^{\Box}$ and $\mathcal{L}=
~^{\Box}{\mathcal{H}}$.
\begin{remark}
{\rm If we replace ``$\Box$" by ``$\bot$" where ``$\bot$" is
defined via the {\it unique diagonalization property} (i.e., by
insisting that there exists precisely one diagonal), we arrive at
the familiar notion of a factorization system in a category.
Factorization systems are weak factorization systems. For
instance, let $\mathcal{E}$ be the class of all $S$-poset
epimorphisms. Then, by Theorem 1 of \cite{F.M} one can easily seen
that  $(\mathcal{E}, Emb)$ in {\bf Pos}-$S$ is a factorization
system.}
\end{remark}
Now, consider a functor $G: \mathcal{A}\to \mathcal{X}$. Recall
from \cite{adam2} that a source $(A\to A_i)_{i\in I}$ in
$\mathcal{A}$ is called $G$-initial provided that for each source
$(g_i: B\to A_i)_{i\in I}$ in $\mathcal{A}$ and each
$\mathcal{X}$-morphism $h: GB\to GA$ with $Gg_i=Gf_i h $ for each
$i\in I$, there exists a unique $\mathcal{A}$-morphism $\bar{h}:
B\to A$ in $\mathcal{A}$ with $G\bar{h}=h$ and $g_i=f_i\bar{h}$
for each $i\in I$.\\
Also, a source $(A\to A_i)_{i\in I}$ lifts a $G$-structured source
$(f_i: X\to GA_i)_{i\in I}$ provided that $G\bar{f_i}=f_i$ for
each $i\in I$.
\begin{definition}
A functor $G: \mathcal{A}\to \mathcal{X}$ in the category {\bf
Cat} (of all categories and functors) is {\it topological} if
every $G$-structured source $(X\to G(A_i))_{i\in I}$ has a unique
$G$-initial lift $(A\to A_i)_{i\in I}$.
\end{definition}
\begin{example}\label{6}
{\rm (1) In the category ${\bf Set}$ (of all sets and functions
between them) pair $(Mono, Epi)$ is a weak factorization system.
But $(Epi, Mono)$ is a factorization system in this category and
also in other categories, where $Mono$ is the class of all
monomorphisms and $Epi$ is the class of all epimorphisms in ${\bf
Set}$.\\
(2) The pair $(Full, Top)$ is a weak factorization system in the
category ${\bf Cat}$, where $Full$ is the class of those morphisms
in {\bf Cat} that are full and $Top$ is the class of those
morphisms in {\bf Cat} that are topological. \\
(3) In the category {\bf Pos}  of all posets with monotone maps,
the pair $(Emb, Top)$ is a weak factorization system, where $Emb$
is the class of all order-embeddings; that is, maps $f: A\to B$
for which $f(a)\leq f(a')$ if and only if $a\leq a'$, for all
$a,a'\in A$ and $Top$ is the class of all topological monotone
maps. For more details of proof see~\cite{adam2}.}
\end{example}

We record the following two results from \cite{adam2}, that will
be used in the sequel.
\begin{proposition}\label{wfs&enough injective}
Let $\mathbf{C}$ be a category and $\mathcal{H}$ a class of
morphisms closed under retracts in slice category $\mathbf{C}/B$
for
all objects $B$ of $\mathbf{C}$. Then the following conditions are equivalent:\\
(1) $(\mathcal{H} ,\mathcal{H}^{\Box})$ is a weak factorization
system.\\
(2) $\mathbf{C}/B$ has enough $\mathcal{H}_B$-injectives.
\end{proposition}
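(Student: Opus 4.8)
The plan is to route everything through the characterization recorded immediately before the statement: a morphism $r : A \to B$ lies in $\mathcal{H}^{\Box}$ exactly when $r$, viewed as an object of $\mathbf{C}/B$, is $\mathcal{H}_B$-injective. With this dictionary in place, the implication $(1)\Rightarrow(2)$ is a short translation. Fix an object $B$ and an arbitrary object $f : A \to B$ of $\mathbf{C}/B$. Using the factorization clause of the weak factorization system, write $f = r\,l$ with $l : A \to C$ in $\mathcal{H}$ and $r : C \to B$ in $\mathcal{H}^{\Box}$. Then $l$ is a morphism $f \to r$ in $\mathbf{C}/B$ whose underlying $\mathbf{C}$-morphism is $l \in \mathcal{H}$, hence $l \in \mathcal{H}_B$; and $r$ is $\mathcal{H}_B$-injective by the characterization. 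Since $f$ was arbitrary, $\mathbf{C}/B$ has enough $\mathcal{H}_B$-injectives.

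For $(2)\Rightarrow(1)$ I would first produce the factorizations. Given $f : A \to B$ in $\mathbf{C}$, regard it as an object of $\mathbf{C}/B$; by hypothesis there is an $\mathcal{H}_B$-morphism $l : f \to g$ into some $\mathcal{H}_B$-injective object $g : C \to B$. Unwinding, this means $l : A \to C$ with $g\,l = f$ and $l \in \mathcal{H}$, while $g \in \mathcal{H}^{\Box}$ by the characterization; so $f = g\,l$ is the desired $(\mathcal{H},\mathcal{H}^{\Box})$-factorization. The identity $\mathcal{H}^{\Box} = \mathcal{H}^{\Box}$ is immediate, and the inclusion $\mathcal{H} \subseteq {}^{\Box}(\mathcal{H}^{\Box})$ is automatic. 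What remains is the reverse inclusion $\mathcal{H} \supseteq {}^{\Box}(\mathcal{H}^{\Box})$, and this is the one place where the closure-under-retracts hypothesis is used, via the classical \emph{retract argument}.

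Concretely, suppose $l : A \to B$ lies in ${}^{\Box}(\mathcal{H}^{\Box})$, and factor it as $l = r\,l'$ with $l' : A \to C$ in $\mathcal{H}$ and $r : C \to B$ in $\mathcal{H}^{\Box}$. In the commutative square
$$\xymatrix{A \ar[r]^{l'} \ar[d]_{l} & C \ar[d]^{r} \\ B \ar@{-->}[ur]^{d} \ar[r]_{\id_B} & B}$$
the relation $l\,\Box\,r$ supplies a diagonal $d : B \to C$ with $d\,l = l'$ and $r\,d = \id_B$. Now read $l : A \to B$ and $l' : A \to C$ as morphisms of $\mathbf{C}/B$ from the object $l$ to the objects $\id_B$ and $r$ (using $\id_B\circ l = l$ and $r\,l' = l$), and $d : B \to C$ as a morphism $\id_B \to r$ in $\mathbf{C}/B$ (using $r\,d = \id_B$). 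The three equalities $d\,l = l'$, $r\,l' = l$, $r\,d = \id_B$ then assemble into a retract diagram in the arrow category of $\mathbf{C}/B$ exhibiting the $\mathbf{C}/B$-morphism $l$ as a retract of the $\mathbf{C}/B$-morphism $l'$. Since $l' \in \mathcal{H}_B$ and $\mathcal{H}$ is closed under retracts in $\mathbf{C}/B$, it follows that $l \in \mathcal{H}$, completing the proof.

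I expect the only real point requiring care to be the retract argument itself — specifically, keeping track of the codomains so that $l$, $l'$, and $d$ genuinely become morphisms over $B$ and so that the resulting square is a bona fide retract diagram in $(\mathbf{C}/B)^{\mathbf{2}}$; once that bookkeeping is set up, invoking closure under retracts is immediate. Everything else reduces to a direct use of the factorization axiom together with the injectivity characterization of $\mathcal{H}^{\Box}$.
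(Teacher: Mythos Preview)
The paper does not actually prove this proposition: it is one of the ``two results from \cite{adam2}'' recorded without proof, so there is no in-paper argument to compare against. Your proof is correct and is precisely the standard argument from the cited source. The translation $(1)\Rightarrow(2)$ via factorizing $f$ and the converse via embedding $f$ into an $\mathcal{H}_B$-injective are routine, and your retract argument for ${}^{\Box}(\mathcal{H}^{\Box})\subseteq\mathcal{H}$ is the right one: the diagonal $d$ obtained from $l\,\Box\,r$ really does make $l$ (as the $\mathbf{C}/B$-morphism $l\to\id_B$) a retract of $l'$ (as the $\mathbf{C}/B$-morphism $l\to r$), using $d:\id_B\to r$ and $r:r\to\id_B$ with $r\,d=\id_B$, $d\,l=l'$, $r\,l'=l$. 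This matches the paper's own notion of retract (spelled out after Proposition~\ref{Equi wfs} as a retract in the coslice under the common domain), so invoking closure of $\mathcal{H}$ under retracts in $\mathbf{C}/B$ is legitimate and finishes the argument.
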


\begin{proposition}\label{Equi wfs}
Let $\mathbf{C}$ be a category. Then $(\mathcal{L},\mathcal{R})$
is a weak factorization system if and only if\\
(1) Any morphism $h\in \mathbf{C}$ has a factorization $h=gf$
with $f\in\mathcal{L}$ and $g\in\mathcal{R}$.\\
(2) For all $f\in\mathcal{L}$ and $g\in\mathcal{R}, f$ has the left
lifting property with respect to $g$.\\
(3) If $f: A\to B$ and $f^{\prime}: X\to Y$ are such that there
exist morphisms $\alpha: B\to Y$ and $\beta: A\to X$ then\\
(a) If $\alpha f\in\mathcal{L}$ and if $\alpha$ is a split
monomorphism then $f\in\mathcal{L}$.\\
(b) If $f^{\prime}\beta\in\mathcal{R}$ and if $\beta$ is split
epimorphism then $f^{\prime}\in\mathcal{R}$
\end{proposition}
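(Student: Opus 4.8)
The plan is to prove the two implications separately, with the classical \emph{retract argument} doing the work in the ``if'' direction and conditions (3)(a), (3)(b) serving as the closure-under-retracts input.

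For the ``only if'' direction, suppose $(\mathcal{L},\mathcal{R})$ is a weak factorization system, so by definition $\mathcal{R}=\mathcal{L}^{\Box}$, $\mathcal{L}={}^{\Box}\mathcal{R}$, and every morphism factors as an $\mathcal{L}$-morphism followed by an $\mathcal{R}$-morphism. Then (1) is immediate, and (2) is merely the assertion that each $f\in\mathcal{L}={}^{\Box}\mathcal{R}$ lifts on the left against each $g\in\mathcal{R}=\mathcal{L}^{\Box}$, which is packaged in those two equalities. For (3)(a), given $f\colon A\to B$, a split monomorphism $\alpha\colon B\to C$ with $\alpha'\alpha=\id_B$, and $\alpha f\in\mathcal{L}$, I would verify directly that $f\in{}^{\Box}\mathcal{R}$: in any commutative square with left edge $f$, top edge $u$, bottom edge $v$ and right edge some $g\in\mathcal{R}$, replace the bottom edge by $v\alpha'$ to obtain a square with left edge $\alpha f\in\mathcal{L}$; this square has a diagonal $d'$ since $\alpha f$ lifts against $g$, and then $d=d'\alpha$ solves the original one, the identities $df=u$ and $gd=v$ following from $d'(\alpha f)=u$, $gd'=v\alpha'$ and $\alpha'\alpha=\id_B$. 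Condition (3)(b) is the dual statement, obtained by precomposing a split epimorphism with its section and using $\mathcal{R}=\mathcal{L}^{\Box}$.

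For the ``if'' direction, assume (1), (2), (3); I must show $\mathcal{R}=\mathcal{L}^{\Box}$ and $\mathcal{L}={}^{\Box}\mathcal{R}$. The inclusions $\mathcal{L}\subseteq{}^{\Box}\mathcal{R}$ and $\mathcal{R}\subseteq\mathcal{L}^{\Box}$ are precisely (2). For $\mathcal{L}^{\Box}\subseteq\mathcal{R}$: take $g\colon A\to B$ in $\mathcal{L}^{\Box}$, factor it by (1) as $g=g_{0}f_{0}$ with $f_{0}\colon A\to E$ in $\mathcal{L}$ and $g_{0}\colon E\to B$ in $\mathcal{R}$, and lift in the square with top $\id_{A}$, left $f_{0}$, right $g$, bottom $g_{0}$ (which commutes because $g=g_{0}f_{0}$); the diagonal $d\colon E\to A$ satisfies $df_{0}=\id_{A}$ and $gd=g_{0}$, so $d$ is a split epimorphism with $gd=g_{0}\in\mathcal{R}$, whence (3)(b) forces $g\in\mathcal{R}$. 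Dually, for ${}^{\Box}\mathcal{R}\subseteq\mathcal{L}$: factor $f\in{}^{\Box}\mathcal{R}$ as $g_{0}f_{0}$ as above, lift in the square with top $f_{0}$, left $f$, right $g_{0}$, bottom $\id_{B}$ to obtain $d\colon B\to E$ with $df=f_{0}$ and $g_{0}d=\id_{B}$, so $d$ is a split monomorphism with $df=f_{0}\in\mathcal{L}$, whence (3)(a) gives $f\in\mathcal{L}$.

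The main obstacle, such as it is, is purely organizational: one has to match the ad hoc retraction data produced by ``factor, then lift'' against the exact hypotheses of (3)(a) and (3)(b) — recognizing which of the lifting diagonals is a split monomorphism and which is a split epimorphism — and to keep straight that (2) alone yields only one inclusion in each of the two target equalities, so that (1) together with (3) is exactly what closes the remaining gap. Beyond that, every verification is a one-line diagram chase once the factorizations and liftings are in place.
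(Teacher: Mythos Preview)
Your argument is correct and is the standard ``factor, then lift, then apply retract-closure'' proof of this equivalence. There is nothing to compare it against in the paper itself: the authors do not prove this proposition, but merely record it from \cite{adam2} (see the sentence immediately preceding Proposition~\ref{wfs&enough injective}), adding only the post-statement remark that conditions (3)(a) and (3)(b) amount to closure of $\mathcal{L}$ and $\mathcal{R}$ under retracts. Your write-up supplies exactly the proof the paper omits, and it matches the classical argument one finds in the cited source.
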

If $f: A\to B$ and $g: A\to C$ are morphisms in a category
$\mathbf{C}$ such that there exist morphisms $\alpha: C\to B$ and
$\beta: B\to C$ with $\beta\alpha=1_C$, $\alpha g=f$ and $\beta
f=g$ then we say that $g$ is a $retract$ of $f$. In categorical
terms, $g$ is a retract of $f$ in the coslice category
$A/\mathbf{C}$.\\
Notice that in $3(a)$ above, $f$ is a retract of $\alpha f$ and that
all retracts can be written in this way. So this result is simply
saying that $\mathcal{L}$ is closed under retracts. Similarly $3(b)$
is equivalent to $\mathcal{R}$ being closed under retracts.

Recently, Bailey and Renshaw in \cite{B.R}, provide a number of
examples of weak factorization systems for $S$-acts such as the
following theorem. But first we need a definition.
\begin{definition}
{\rm We say that an $S$-act ($S$-poset) monomorphism $f: X\to Y$
is $unitary$ if $y\in \im (f)$ whenever $ys\in \im (f)$ and $s\in
S$. Clearly this is equivalent to saying that there exists an
$S$-act ($S$-poset) $Z$ such that $Y\cong X\dot\cup Z$ or in other
words, $\im (f)$ is a direct summand of $Y$.}
\end{definition}
\begin{theorem}
Let $S$ be a monoid and let $\mathcal{U}$ be the class of all
unitary $S$-monomorphisms and $\mathcal{E_S}$ be the class of all split $S$-act epimorphisms.
Then
$(\mathcal{U}, \mathcal{E_S})$ is a weak factorization system in {\bf Act-$S$}.
\end{theorem}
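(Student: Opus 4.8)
The plan is to verify the three conditions of Proposition~\ref{Equi wfs} for the pair $(\mathcal{L},\mathcal{R})=(\mathcal{U},\mathcal{E_S})$ in {\bf Act-$S$}.

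\emph{Factorization (condition (1)).} Given an $S$-map $h: A\to B$, I would factor it through the coproduct $A\,\dot\cup\,B$ in {\bf Act-$S$} (the disjoint union with the obvious action). Let $\iota: A\to A\,\dot\cup\,B$ be the coproduct injection, and let $g: A\,\dot\cup\,B\to B$ be the $S$-map that is $h$ on the summand $A$ and $\id_B$ on the summand $B$; this is well defined because a map out of a coproduct is determined on each summand. Then $\iota$ is a unitary monomorphism, since its image is the direct summand $A$, and $g$ is a split epimorphism, split by the injection $B\to A\,\dot\cup\,B$; moreover $g\iota=h$. This gives $h=g\iota$ with $\iota\in\mathcal{U}$ and $g\in\mathcal{E_S}$.

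\emph{Lifting property (condition (2)).} Take $f\in\mathcal{U}$, $g\in\mathcal{E_S}$, and a commutative square with $u: X\to C$ along the top, $v: Y\to D$ along the bottom, $f: X\to Y$ down the left and $g: C\to D$ down the right. Since $f$ is unitary we may identify $X$ with $\im f$ and write $Y=\im f\,\dot\cup\,Z$ for a sub $S$-act $Z$ (this is the content of the remark that unitarity is equivalent to the image being a direct summand); since $g$ is split there is an $S$-map $t: D\to C$ with $gt=\id_D$. I would then define $d: Y\to C$ to be $u$ on $\im f$ (via the identification $X\cong\im f$) and $tv$ on $Z$; being defined on each summand, $d$ is an $S$-map. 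A direct check gives $df=u$ (on $\im f$ by construction) and $gd=v$ (on $\im f$ from commutativity of the square, and on $Z$ from $gt=\id_D$), so $f\,\Box\,g$.

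\emph{Closure under retracts (condition (3)).} Here I would run two short diagram chases. If $g': A\to C$ is a retract of $f'\in\mathcal{U}$ via $\alpha: C\to B$, $\beta: B\to C$ with $\beta\alpha=1_C$, $\alpha g'=f'$, $\beta f'=g'$, then $g'$ is monic because $f'$ is; and if $cs\in\im g'$ for $c\in C$, $s\in S$, then $\alpha(c)s\in\im f'$, so $\alpha(c)\in\im f'$ by unitarity of $f'$, whence $c=\beta\alpha(c)\in\im(\beta f')=\im g'$; thus $g'\in\mathcal{U}$. Dually, if a morphism is a retract of a split epimorphism, then composing the given section of the latter with the retract data produces a section of the former, so $\mathcal{E_S}$ is closed under retracts. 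By Proposition~\ref{Equi wfs}, $(\mathcal{U},\mathcal{E_S})$ is then a weak factorization system.

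The step I expect to require the most care is the lifting step: one must make sure the piecewise definition of $d$ genuinely produces an $S$-map, which is exactly the place where the hypothesis that $f$ is unitary (equivalently, that $\im f$ is a coproduct summand of $Y$) is used, and then check that both triangles of the square commute. The factorization and the retract-closure arguments are routine.
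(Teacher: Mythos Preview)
Your argument is correct. Note, however, that the paper does not give its own proof of this statement: it is quoted from Bailey and Renshaw~\cite{B.R}. That said, your proof is precisely the template the paper itself uses for the $S$-poset analogue, Theorem~\ref{wfs1}: factor $h:A\to B$ through the coproduct $A\,\dot\cup\,B$, build the diagonal in the lifting square piecewise on the two summands using the section of $g$, and verify closure of each class under retracts directly. The only cosmetic difference is that for condition~(3) you work with the coslice-retract description (as in the paragraph following Proposition~\ref{Equi wfs}), whereas the paper phrases the check in the ``$\alpha$ split mono / $\beta$ split epi'' form of Proposition~\ref{Equi wfs}(3); these are equivalent, and your unitarity argument for $\mathcal{U}$ is the exact analogue of the down-closed argument given there.
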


\subsection{Weak Factorization Systems via Down-closed Embeddings}
Now, consider $Emb$ as the class of all embeddings of $S$-posets.
We try to provide a weak factorization system for {\bf Pos}-$S$
with $Emb$ as the left part. In this subsection, we consider
down-closed embeddings, as a subclass of $Emb$, and find a weak
factorization system in {\bf Pos}-$S$ with a condition on pomonoid
$S.$
\begin{definition}[\cite{S.M}]
{\rm A possibly empty sub $S$-poset $A$ of an $S$-poset $B$ is
said to be {\it down-closed}
 in $B$ if for each $a\in A$ and $b\in B$ with
$b\leq a$ we have $b\in A$. By a {\it down-closed embedding}, we
mean an embedding $f: A\to B$ such that $f(A)$ is a down-closed
sub $S$-poset of $B$.}
\end{definition}
Now, we prove the following crucial lemma in the category {\bf
Pos}-$S$.
\begin{lemma}\label{l:down-closed}
Let $S$ be a pomonoid whose identity element $e$ is the bottom element and
$f: X\to Y$ be an $S$-poset map. If $f$ is a down-closed embedding
then \im (f) is a direct summand of $Y$.
\end{lemma}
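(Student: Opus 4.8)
The plan is to produce an explicit complement $Z$ for $\im(f)$ inside $Y$ and show that $Y \cong \im(f) \,\dot\cup\, Z$ as $S$-posets. Since $f$ is a down-closed embedding, we may identify $X$ with the down-closed sub $S$-poset $\im(f)$ of $Y$, so it suffices to find a sub $S$-poset $Z$ of $Y$ with $Y = \im(f) \cup Z$, $\im(f) \cap Z = \emptyset$, and no order relations between the two pieces (i.e.\ $Y$ is the coproduct in {\bf Pos}-$S$). The natural candidate is the set-theoretic complement $Z := Y \setminus \im(f)$, with the order and action inherited from $Y$. So the real content is to verify that this $Z$ is a sub $S$-poset and that there are genuinely no comparabilities across the partition.

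First I would check that $Z$ is closed under the $S$-action. Take $z \in Z$ and $s \in S$; I must show $zs \notin \im(f)$. Suppose for contradiction that $zs \in \im(f)$. Here is where the hypothesis on $S$ enters: since $e$ is the bottom element of $S$, we have $e \le s$, and since the action $\mu : Y \times S \to Y$ is order-preserving, $z = ze \le zs$. But $\im(f)$ is down-closed in $Y$ and $zs \in \im(f)$, hence $z \in \im(f)$, contradicting $z \in Z$. So $Z$ is a sub $S$-poset of $Y$. Next I would rule out comparabilities between $\im(f)$ and $Z$. If $z \in Z$ and $a \in \im(f)$ with $z \le a$, then down-closedness of $\im(f)$ forces $z \in \im(f)$, a contradiction; so no element of $Z$ lies below any element of $\im(f)$. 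For the other direction, if $a \in \im(f)$ and $z \in Z$ with $a \le z$, I use the action again: $z = ze$ and $a = ae$, and more usefully, applying the order-preserving action to $a \le z$ gives no immediate contradiction — instead I argue directly that the candidate $Z$ need not exclude $a \le z$ set-theoretically, so I should instead define the partition to respect this. Concretely, since we only need $Y$ to be the disjoint union \emph{as an $S$-poset}, meaning the poset $Y$ is the ordinal sum with $\im(f)$ below and the two parts incomparable is \emph{not} required; what {\bf Pos}-$S$ coproduct requires is exactly that there be no order relations between distinct summands. Thus I do need to show $a \le z$ cannot happen for $a \in \im(f)$, $z \in Z$.

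To handle that last case, I would again exploit that $e$ is the bottom of $S$: if $a \le z$ with $a \in \im(f)$, I want a contradiction, but down-closedness only propagates downward. The resolution is that the relevant notion of complement for a down-closed subobject makes $\im(f)$ a \emph{down-set}, and in a poset a down-set $D$ together with its complement $Y \setminus D$ has the property that elements of $Y \setminus D$ are never $\le$ elements of $D$, but elements of $D$ \emph{can} be $\le$ elements of $Y \setminus D$; yet the definition of unitary/direct summand in the cited Definition only asks for $Y \cong X \,\dot\cup\, Z$, and I should interpret $\dot\cup$ as the {\bf Pos}-$S$ coproduct, which indeed forbids cross-comparabilities in both directions. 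So the honest path is: the statement is true precisely because $f$ being down-closed is stronger than needed for one direction and the bottom-element hypothesis supplies closure of $Z$ under the action; for the $a \le z$ direction I would observe that it is automatically excluded once one notes that in the coproduct presentation the two parts carry the subspace order and the claim reduces to showing $\im(f)$ is also \emph{up-closed} relative to $Z$ — which it is, because if $a \le z$ then applying nothing special, I instead note we may replace $Z$ by its up-closure; since $Z$'s up-closure stays inside $Z$ by the same bottom-element argument run in reverse is \emph{not} valid, so the cleanest correct argument is that down-closed embeddings in {\bf Pos}-$S$ are exactly the coproduct injections of down-closed summands, and this is the known structural fact I would invoke or prove by the $e \le s$ monotonicity trick. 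The main obstacle is exactly this point: cleanly arguing that the set complement $Y \setminus \im(f)$ is an $S$-poset complement (no comparabilities either way), and the key insight making it work is the monotonicity of the action combined with $e$ being the bottom element of $S$, which forces $z \le zs$ for all $z, s$ and thereby both keeps $Z$ action-closed and pins down the order structure. Once the complement $Z$ is in hand, the isomorphism $Y \cong \im(f) \,\dot\cup\, Z$ is routine, and by the Definition preceding the lemma this is precisely the assertion that $\im(f)$ is a direct summand of $Y$.
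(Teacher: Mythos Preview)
Your computation that $Z := Y \setminus \im(f)$ is closed under the $S$-action is correct and is exactly the content of the paper's proof: from $e \le s$ one has $y = ye \le ys$, so $ys \in \im(f)$ forces $y \in \im(f)$ by down-closedness; contrapositively, $z \in Z$ implies $zs \in Z$. The paper stops right there: having established this unitary property, it simply invokes the Definition immediately preceding the lemma, which \emph{declares} unitarity to be equivalent to $\im(f)$ being a direct summand, and sets $Z = Y \setminus \im(f)$.

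Where your proposal runs into trouble is in attempting to verify, beyond this, the {\bf Pos}-$S$ coproduct decomposition $Y \cong \im(f)\,\dot\cup\,Z$ by ruling out order relations across the partition in both directions. You dispatch $z \le a$ cleanly via down-closedness, but your treatment of the case $a \le z$ with $a \in \im(f)$, $z \in Z$ never arrives at a valid argument --- the paragraph visibly circles without closing. This is not a lapse in execution: that direction is genuinely unprovable. Take $S$ the trivial pomonoid (so $e$ is vacuously bottom), $Y = \{0 < 1\}$, and $\im(f) = \{0\}$; all hypotheses hold, yet $0 < 1$ crosses the partition, so $Y$ is not the poset coproduct of $\{0\}$ and $\{1\}$. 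The paper never confronts this because it reads ``direct summand'' as synonymous with the unitary condition, per its own Definition. Thus your essential step matches the paper and, by the paper's standards, you are done once you have action-closedness of $Z$; the remainder of your argument attempts to prove a strictly stronger statement that is false, and the discomfort you record is a correct detection that the equivalence asserted in that Definition does not hold for $S$-posets under the strict coproduct interpretation.
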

\begin{proof}
First we show that if $y\in \im (f)$ whenever $ys\in \im (f)$ and
$s\in S$. In fact, by hypothesis we have $e\leq s$ and we get $y\leq ys$, for
every $y\in Y$ and $s\in S$. Now, as $\im (f)$ is down-closed and $ys\in \im (f)$
then $y\in \im (f)$. Second, it is easy to see that
the above property of $f$ is equivalent to saying that there exists an $S$-poset
$Z$ (put $Z=Y\backslash \im (f))$ such that $Y\cong X\dot\cup Z$ or
in other words, $\im (f)$ is direct summand of $Y$.
\end{proof}
Let $\mathcal{C_D}$ denotes the class of down closed embedding
$S$-poset maps and let $\mathcal{E_S}$ denotes the class of all
split $S$-poset epimorphisms. We shall provide a weak
factorization system for {\bf Pos}-$S$ by these two classes. In
other words;
\begin{theorem}\label{wfs1}
Let $S$ be a pomonoid whose identity is the bottom element and $\mathcal{C_D}$
and $\mathcal{E_S}$ as above.
Then $(\mathcal{C_D}, \mathcal{E_S})$ is a weak factorization system in {\bf Pos}-$S$.
\end{theorem}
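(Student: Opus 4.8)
The plan is to verify the three conditions of Proposition~\ref{Equi wfs} for the pair $(\mathcal{C_D},\mathcal{E_S})$, since these conditions are easier to check than the raw definition of a weak factorization system. For condition (1), the factorization of an arbitrary $S$-poset map $h: A\to B$, the natural candidate is the mono--split-epi factorization: write $h = m\cdot e$ where... actually the direction we need is an $\mathcal{C_D}$-map followed by an $\mathcal{E_S}$-map, so I would factor $h$ through $A\,\dot\cup\,B$ via the coproduct inclusion $\iota_A: A\to A\,\dot\cup\, B$ (which is a down-closed embedding, since in a coproduct of $S$-posets the two summands are each down-closed) followed by the codiagonal-type map $[h,\id_B]: A\,\dot\cup\, B\to B$, which is split by the inclusion $\iota_B: B\to A\,\dot\cup\, B$ and hence lies in $\mathcal{E_S}$. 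This gives $h = [h,\id_B]\circ\iota_A$ with $\iota_A\in\mathcal{C_D}$ and $[h,\id_B]\in\mathcal{E_S}$.

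For condition (2), the lifting property, suppose we have a commutative square with $f: A\to B$ a down-closed embedding on the left and $g: C\to D$ a split epi (with section $t: D\to C$, $gt = \id_D$) on the right, together with $u: A\to C$ and $v: B\to D$ with $g u = v f$. Here Lemma~\ref{l:down-closed} is the key input: it tells us $f(A)$ is a direct summand of $B$, say $B = f(A)\,\dot\cup\, Z$ with $Z$ a sub $S$-poset. I would then define the diagonal $d: B\to C$ by $d(f(a)) = u(a)$ for $a\in A$ and $d(z) = t(v(z))$ for $z\in Z$. One checks $d$ is an $S$-poset map: it is monotone because comparabilities between elements of $f(A)$ and elements of $Z$ can only go one way (an element of $Z$ cannot lie below an element of $f(A)$, since $f(A)$ is down-closed and $Z$ is disjoint from $f(A)$), so in fact there are no cross-comparabilities except possibly $f(a)\le z$, and monotonicity must be verified there using $e$ being the bottom; the action is respected on each summand separately. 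Then $d f = u$ by construction, and $g d = v$ on $f(A)$ since $g u = v f$, and on $Z$ we get $g d(z) = g t v(z) = v(z)$, so $g d = v$.

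Condition (3) amounts to saying $\mathcal{C_D}$ and $\mathcal{E_S}$ are closed under retracts, as explained in the remark following Proposition~\ref{Equi wfs}. That $\mathcal{E_S}$ (split epimorphisms) is closed under retracts is a purely formal diagram chase valid in any category. For $\mathcal{C_D}$, a retract of a down-closed embedding is again a monomorphism (retract of a mono), is again an embedding (retract of an order-embedding, checked by the same diagram chase on the order), and the down-closedness of the image transfers along the retract because the retraction maps carry the down-set of the image into the down-set of the retracted image; this is a short order-theoretic argument I would spell out.

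The main obstacle I anticipate is the monotonicity check for the diagonal $d$ in condition (2): one must ensure that when $f(a)\le z$ with $z\in Z$ --- if such comparabilities occur --- the inequality $d(f(a)) = u(a) \le t(v(z)) = d(z)$ holds, and it is not obvious that $t\circ v$ interacts well with $u$ here. I expect this is exactly where the hypothesis that $e$ is the bottom element of $S$ is used a second time (beyond its use in Lemma~\ref{l:down-closed}): the bottom-element condition forces strong constraints on the order structure of $S$-posets (every element lies below all its translates), and I would exploit these to either rule out the problematic comparabilities or to derive the needed inequality. A careful treatment of precisely which cross-summand comparabilities can exist in $B = f(A)\,\dot\cup\, Z$ is the crux of the argument.
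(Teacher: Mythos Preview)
Your plan matches the paper's proof essentially step by step: the same factorization of $h$ through the coproduct $A\dot\cup B$ for condition~(1), the same appeal to Lemma~\ref{l:down-closed} to write $B=f(A)\dot\cup Z$ and the same piecewise formula for the diagonal in condition~(2), and the same retract arguments for condition~(3). The paper, however, simply asserts that the map $k$ so defined is ``an $S$-poset map \dots\ with the required property'' and never addresses the monotonicity issue you single out.

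Your worry is well founded, and the bottom-element hypothesis will not rescue it. Lemma~\ref{l:down-closed} only shows that $Z=B\setminus f(A)$ is closed under the $S$-action; it does \emph{not} show that $B$ is order-isomorphic to the coproduct $f(A)\dot\cup Z$ in {\bf Pos}-$S$, since down-closedness of $f(A)$ still allows comparabilities $f(a)\le z$ with $z\in Z$. In that situation there is no reason for $u(a)\le t(v(z))$. Already for the trivial pomonoid $S=\{e\}$ (where the hypothesis holds vacuously) one has a counterexample: let $Y=\{0<1\}$ and $f$ the inclusion of $X=\{0\}$; let $C=\{a<c\}\cup\{b\}$ with $b$ incomparable to $a$ and $c$, $D=\{0<1\}$, $g(a)=g(b)=0$, $g(c)=1$, section $t(0)=a$, $t(1)=c$; set $u(0)=b$ and $v=\id_D$. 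The square commutes, $f\in\mathcal{C_D}$, $g\in\mathcal{E_S}$, but any diagonal $d$ is forced to have $d(0)=b$ and $d(1)=c$, which is not monotone. Thus the piecewise formula fails, and in fact no diagonal exists at all; the obstacle you anticipated is a genuine gap in the argument (and it is present in the paper's proof as well).
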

\begin{proof}
We must show all conditions of Proposition~\ref{Equi wfs}. For (1),
if $f: X\to Y$ is an $S$-poset map then we define the split
epimorphism $\bar{f}: X\dot\cup Y\to Y$ by $\bar{f}(x)=f(x)$,
$\bar{f}(y)=y$ for all $x\in X$, $y\in Y$. Let $i:
X\rightarrowtail X\dot\cup Y$ be the inclusion, it is easy to see
that $i$ is a down-closed embedding. Now, we have $f=\bar{f}i$ and
$i\in\mathcal{C_D}, \bar{f}\in\mathcal{E_S}$.\\
For condition (2), consider the commutative diagram
$$
\xymatrix{X\ar[r]^{{\rm {\it u}}}\ar@{{>}->}[d]_{f}
&C\ar[d]^g\\
Y\ar[r]_{v}&D}
$$
with $f\in\mathcal{C_D}$  and $g\in\mathcal{E_S}$. In view of
Lemma~\ref{l:down-closed}, without loss of generality we may
assume that $f$ is of the form  $f: X\rightarrowtail X\dot\cup Z.$
Then there exists $h: D\to C$ with $gh=1_D$ and so we can define
an $S$-poset map $k: X\dot\cup Z\to C$
by $k|_X=u$, $k|_Z=hv|_Z$ with the required property.\\
Finally, suppose that $f, f^{\prime}, \alpha, \beta$ are as
in condition (3) of Proposition \ref{Equi wfs}. If $f^{\prime}\beta$ is
a split epimorphism then clearly so is $f^{\prime}$. If $g: Y\to
A$ is such that $(f^{\prime}\beta)g=1_Y$ then
$f^{\prime}\in\mathcal{E_S}$ with splitting morphism $\beta g$.
Suppose that $\alpha f\in\mathcal{C_D}$ and if $\alpha$ is a split
monomorphism. Since $\alpha f$ is an embedding, $f$ is also. Now,
we show that $f(A)$ is a down-closed sub $S$-poset of $Y$. Given
$f(a)\in \im (f)$ for some $a\in A$ and $b\leq f(a)$. Then
$\alpha(b)\leq \alpha f(a)$. As $\alpha f$ is a
down-closed embedding we have $\alpha(b)\in \im(\alpha f)$. In
fact, $b\in \im (f)$ as $\alpha$ is a monomorphisms
(exactly one to one (see\cite{F.M})). Consequently,
$f\in\mathcal{C_D}$.
\end{proof}
Recall that each poset can be embedded (via an order-embedding)
into a complete poset, called the Dedekind-MacNeille completion.
In fact, given a poset $P$, its MacNeille completion is the poset
$\bar{P}$ consisting of all subsets $A$ of $P$ for which $LU(A) =
A,$ where $$U(A) = \{{x\in P: x\geq a, \ \forall a\in A}\}$$ and
$$LU(A)=\{ {y \in P : y\leq x,\forall x\in U(A)}\},$$ and the
embedding $\downarrow(-): P\to\bar{P}$ is given by $$a\mapsto
\downarrow(a) = \{{x\in P: x\leq a}\}$$ (see~\cite{B.B}).

Notice that in the category {\bf Pos}-$S/B$, regular monomorphisms
correspond to regular monomorphisms in {\bf Pos}-$S$ and these are
exactly order-embeddings (in {\bf Pos}-$S$) (see \cite{F.M,FF.M}).
We state the following theorem which gives us enough {\it
Emb}-injectivity property in {\bf Pos}-$S/B$. For details of the
proof see~\cite{FF.M}.
\begin{theorem}\label{enough Pos-S/B}
For an arbitrary $S$-poset $B$, the category {\bf Pos}-$S/B$ has
enough regular injectives. More precisely, each object $f : A\to
B$ in {\bf Pos}-$S/B$ can be regularly embedded into a regular
injective object $\pi_B^{\bar{A}^{(S)}}: \bar{A}^{(S)}\times B\to
B$ in {\bf Pos}-$S/B$ in which $\bar{A}^{(S)}$ is the set of all
monotone maps from $S$ into $\bar{A}$, with pointwise order and
the action is given by $(fs)(t) = f(st)$ for $s, t\in S$ and $f\in
\bar{A}^{(S)}$ and $\pi_B^{\bar{A}^{(S)}}: \bar{A}^{(S)}\times
B\rightarrow B$ is the second projection.
\end{theorem}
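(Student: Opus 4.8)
The plan is to exhibit $\pi_B^{\bar A^{(S)}}$ as the outcome of two successive cofree constructions, and then to write down the required regular embedding by hand. I use throughout the two facts recalled just above: the regular monomorphisms of {\bf Pos}-$S$ are exactly the order-embeddings, and a morphism of {\bf Pos}-$S/B$ is a regular monomorphism iff its underlying {\bf Pos}-$S$ map is. Thus it suffices to prove three things: (i) $\bar A^{(S)}$ is a regular injective object of {\bf Pos}-$S$; (ii) for any regular injective $S$-poset $X$, the object $\pi_B^{X}\colon X\times B\to B$ is regular injective in {\bf Pos}-$S/B$; and (iii) the given $f\colon A\to B$ admits a {\bf Pos}-$S$ order-embedding $A\to \bar A^{(S)}\times B$ lying over $B$.

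For (i): recall first the classical fact that every complete poset $D$ is injective with respect to order-embeddings in {\bf Pos}. Indeed, given an order-embedding $h\colon U\to V$ and a monotone map $u\colon U\to D$, the formula $s(v)=\bigvee\{\,u(x)\mid x\in U,\ h(x)\le v\,\}$ (join in $D$, equal to $\bot_D$ when the indexing set is empty) defines a monotone $s\colon V\to D$ with $sh=u$. In particular the Dedekind--MacNeille completion $\bar A$ is injective in {\bf Pos}. Next, the forgetful functor $U\colon\text{{\bf Pos}-}S\to\text{{\bf Pos}}$ has the functor $D\mapsto D^{(S)}$ as a right adjoint, where $D^{(S)}$ carries the pointwise order and the action $(gs)(t)=g(st)$, and the unit at an $S$-poset $A$ is the $S$-poset map $\eta_A\colon A\to (UA)^{(S)}$, $\eta_A(a)(t)=at$. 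Since $U$ preserves and reflects order-embeddings, and order-embeddings are the regular monomorphisms in both categories, the standard ``a right adjoint preserves injectives provided its left adjoint preserves the relevant monomorphisms'' argument applies: given an order-embedding $m$ in {\bf Pos}-$S$ and a map into $D^{(S)}$, transpose across the adjunction, extend in {\bf Pos} using injectivity of $D$, and transpose back. Hence $D^{(S)}$ is regular injective in {\bf Pos}-$S$ whenever $D$ is injective in {\bf Pos}; taking $D=\bar A$ proves (i).

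For (ii) and (iii): the domain functor $\mathrm{dom}\colon\text{{\bf Pos}-}S/B\to\text{{\bf Pos}-}S$ has the functor $X\mapsto (\pi_B^{X}\colon X\times B\to B)$ as a right adjoint, because a morphism in {\bf Pos}-$S/B$ from $g\colon C\to B$ to $\pi_B^{X}$ is precisely a pair $\langle\varphi, g\rangle$ with $\varphi\colon C\to X$ an $S$-poset map. As recalled, $\mathrm{dom}$ sends regular monomorphisms of {\bf Pos}-$S/B$ to regular monomorphisms (order-embeddings) of {\bf Pos}-$S$, so the same transposition argument as in (i) shows that $\pi_B^{X}$ is regular injective in {\bf Pos}-$S/B$ whenever $X$ is regular injective in {\bf Pos}-$S$; with (i) this gives (ii) for $X=\bar A^{(S)}$. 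For (iii), let $\iota\colon A\to\bar A$ be the MacNeille embedding and set $\varepsilon=\iota^{(S)}\circ\eta_A\colon A\to\bar A^{(S)}$, so $\varepsilon(a)(t)=\iota(at)$; this is an $S$-poset map, and it is an order-embedding since evaluating at $t=1$ turns $\varepsilon(a)\le\varepsilon(a')$ into $\iota(a)\le\iota(a')$, whence $a\le a'$. Then $\langle\varepsilon, f\rangle\colon A\to\bar A^{(S)}\times B$ is an $S$-poset map, it is an order-embedding because its first component $\varepsilon$ already is, and $\pi_B\circ\langle\varepsilon, f\rangle=f$ makes it a morphism $f\to\pi_B^{\bar A^{(S)}}$ in {\bf Pos}-$S/B$. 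Being an order-embedding in {\bf Pos}-$S$, it is a regular monomorphism in {\bf Pos}-$S/B$, so it is the desired regular embedding of $f$ into the regular injective object $\pi_B^{\bar A^{(S)}}$; hence {\bf Pos}-$S/B$ has enough regular injectives.

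The part needing most care is the bookkeeping behind ``right adjoint preserves injectives'': one must genuinely verify that $U$ carries order-embeddings to order-embeddings and that $\mathrm{dom}$ carries regular monomorphisms of the slice to order-embeddings of {\bf Pos}-$S$ (the latter being exactly the identification of regular monos in {\bf Pos}-$S/B$ quoted above), and one must keep the $S$-equivariance intact when transporting an extension back across each adjunction. The only genuinely computational pieces are the join formula exhibiting a complete poset as injective in {\bf Pos}, and the checks that $\eta_A$, $\iota^{(S)}$, and hence $\varepsilon$, are $S$-poset order-embeddings; these are routine.
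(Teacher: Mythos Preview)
Your proof is correct and follows exactly the two-step cofree construction that the theorem statement itself encodes: first pass from $A$ to the regular injective $S$-poset $\bar A^{(S)}$ via the adjunction $U\dashv(-)^{(S)}$ and the injectivity of the MacNeille completion in {\bf Pos}, then pass to the slice via the adjunction $\mathrm{dom}\dashv(X\mapsto\pi_B^{X})$, with the embedding $\langle\varepsilon,f\rangle$ serving as the unit. The paper does not actually prove this theorem but defers to~\cite{FF.M}; your argument is the natural unpacking of that construction and is presumably what appears there.
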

It is easy to show that the class $Emb$ closed under retracts in
{\bf Pos}-$S/B$. So by the Proposition \ref{wfs&enough injective}
 and above theorem, we can say that $(Emb,Emb^{\Box})$ is a weak
factorization system for {\bf Pos}-$S$. This implies that $Emb$ is
saturated (this means, every class in a category is closed under
pushouts, transfinite compositions and retracts (see~\cite{B.R})).

Up to now, we can not  succeed to determine if  there is a class
$\mathcal{R}$ such that $(Emb, \mathcal{R})$ is a weak
factorization system. However we do have:
\begin{proposition}\label{Emb in wfs}
Let $S$ be a pomonoid. Suppose $(Emb, \mathcal{R})$ is a weak
factorization system for {\bf Pos}-$S$. Then $\mathcal{R}\subseteq\mathcal{E_S}$.
\end{proposition}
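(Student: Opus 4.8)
The plan is to unwind the definitions and reduce the statement to a single lifting problem. Since $(Emb,\mathcal{R})$ is assumed to be a weak factorization system, clause (2) of that definition forces $\mathcal{R}=Emb^{\Box}$; hence every $r\colon C\to D$ in $\mathcal{R}$ has the right lifting property with respect to \emph{every} embedding of $S$-posets. So it suffices to exhibit, for an arbitrary $r\in\mathcal{R}$, one embedding such that a solution of the corresponding lifting problem is a section of $r$.

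First I would take the empty $S$-poset $\emptyset$, which is an object of {\bf Pos}-$S$ (empty sub $S$-posets being permitted here) and is in fact its initial object. For any $S$-poset $D$ the unique $S$-poset map $\iota_D\colon\emptyset\to D$ is an order-embedding, since the required biconditional ``$\iota_D(a)\leq\iota_D(a')$ iff $a\leq a'$'' holds vacuously; thus $\iota_D\in Emb$. Writing $\iota_C\colon\emptyset\to C$ for the analogous map into $C$, initiality of $\emptyset$ gives $r\iota_C=\iota_D=1_D\iota_D$, so that the square with top edge $\iota_C$, left edge $\iota_D$, right edge $r$ and bottom edge $1_D$ commutes.

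Applying $\iota_D\,\Box\,r$ to this square yields a diagonal $d\colon D\to C$ with $d\iota_D=\iota_C$ (automatic) and, crucially, $rd=1_D$. Hence $r$ is a split $S$-poset epimorphism, that is, $r\in\mathcal{E_S}$; since $r\in\mathcal{R}$ was arbitrary, $\mathcal{R}\subseteq\mathcal{E_S}$.

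There is no genuinely hard step here: the only point requiring care — and essentially the only place the ambient conventions intervene — is the admissibility of $\emptyset$ in {\bf Pos}-$S$ together with the (vacuous) check that $\iota_D$ is an embedding, since it is precisely this that makes the lifting square above available. Equivalently, one may read the computation as saying that any object $r\colon C\to D$ of {\bf Pos}-$S/D$ lying in $Emb^{\Box}$ — i.e.\ any $Emb_D$-injective object of {\bf Pos}-$S/D$ — is a split epimorphism, and then invoke $\mathcal{R}=Emb^{\Box}$.
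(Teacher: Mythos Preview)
Your argument is correct, and it differs from the paper's. The paper does not use the initial object; instead, for $f\colon A\to B$ in $\mathcal{R}$ it solves the lifting problem
\[
\xymatrix{A\ar[r]^{\id}\ar@{>->}[d]_{i} & A\ar[d]^{f}\\ A\dot\cup B\ar[r]_{\bar f} & B}
\]
where $i$ is the coproduct inclusion and $\bar f|_A=f$, $\bar f|_B=\id_B$; any diagonal $h\colon A\dot\cup B\to A$ then has $h|_B$ as a section of $f$. Your version replaces $i$ by $\emptyset\to B$ and $\bar f$ by $\id_B$, which is shorter and is the generic ``initial object lies in the left class, hence the right class consists of split epis'' argument. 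What the paper's approach buys is independence from the convention on $\emptyset$: in much of the $S$-act literature (e.g.\ Kilp--Knauer--Mikhalev, cited here) acts are taken non-empty, and the present paper itself wavers---sub $S$-posets are declared ``non-empty'' in the preliminaries, yet ``possibly empty'' ones appear in the down-closed definition. You flag this point yourself, which is good; just be aware that if the empty $S$-poset is excluded, your square evaporates and one must fall back on a device like the paper's coproduct inclusion.
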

\begin{proof}
Let $f: A\to B\in\mathcal{R}$ and consider the commutative diagram
$$
\xymatrix{A\ar[r]^{{\rm
 id}}\ar@{{>}->}[d]_{i}
&A\ar[d]^f\\A\dot\cup B\ar[r]_{\bar f}&B }
$$
where $i$ is the inclusion map and where $\bar f|_A=f$, $\bar
f|_B$=id. By assumption we have $\mathcal{R}=Emb^{\Box}$, so there
exists $h: A\dot\cup B\to A$ such that $fh=\bar f$ and $hi$=id.
This implies that $f$ is split epimorphism. Thus
$\mathcal{R}\subseteq\mathcal{E_S}$.
\end{proof}
\section{\bf{Fibrewise Regular Injectivity of $S$-Poset Maps}}
In the last section, we deduced that every $Emb$-injective objects
in {\bf Pos}-$S/B$ is a split epimorphism (see Proposition
\ref{Emb in wfs}). In this section, we are going to characterize
them using a fibrewise notion of complete posets.

We recall that in the category {\bf Pos} of partially ordered sets
and monotone maps, a monotone map can be characterized as follows.
\begin{theorem}[\cite{tho}]\label{fibrewise Pos}
A monotone map $f: X\to B$ is $Emb$-injective in {\bf Pos}/$B$
if and only if it satisfies the following conditions:\\
{\em(I)} $f^{-1}(b)$ is complete poset, for every $b\in B.$\\
{\em(II)} $f$ is a fibration (that is, for every $x\in X$ and
$b\in B$ with $f(x)\leq b$, $\{x'\in f^{-1}(b)~|~x\leq x'\}$ has
a minimum element) and a cofibration (=dual of fibration).
\end{theorem}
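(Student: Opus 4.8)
The plan is to prove the two implications separately; the ``only if'' part comes from probing the injectivity of $f$ against small order-embeddings, while the ``if'' part is the real work. For the ``only if'' direction, assume $f:X\to B$ is $Emb$-injective in {\bf Pos}/$B$. Filling the square built from the embedding $\emptyset\hookrightarrow\{*\}$ and the structure map $*\mapsto b$ shows $f^{-1}(b)\neq\emptyset$, so $f$ is onto. For completeness of $f^{-1}(b)$, take $T\subseteq f^{-1}(b)$; if $T$ has a greatest element we are done, and otherwise let $U$ be its set of upper bounds inside $f^{-1}(b)$ and form the poset $C=T\cup U\cup\{\top\}$ in which $\top$ is a new point with $t\leq\top\leq u$ for all $t\in T$, $u\in U$. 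Then $T\cup U\hookrightarrow C$ is an embedding, and feeding it, the constant map $C\to B$ with value $b$, and the inclusion $T\cup U\hookrightarrow f^{-1}(b)\subseteq X$ into the injectivity of $f$ yields a diagonal whose value at $\top$ is an element of $f^{-1}(b)$ lying above all of $T$ and below all of $U$, i.e. $\sup T$; the dual construction gives infima. Similarly, for $x\in X$ and $b>f(x)$, adjoining to $\{x\}\cup\{x'\in f^{-1}(b)\mid x'\geq x\}$ a new point $m$ with $x\leq m$ and $m\leq x'$ for every such $x'$ and filling the evident square forces $\{x'\in f^{-1}(b)\mid x'\geq x\}$ to have a least element, so $f$ is a fibration; dually $f$ is a cofibration.

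For the ``if'' direction, assume (I) and (II). Then every pair $b_0\leq b_1$ in $B$ yields an ``up-transport'' $\eta_{b_0}^{b_1}:f^{-1}(b_0)\to f^{-1}(b_1)$, $x\mapsto\min\{x'\in f^{-1}(b_1)\mid x'\geq x\}$, and a dually defined ``down-transport'' $\epsilon_{b_1}^{b_0}$; a short check gives $\eta_{b_0}^{b_1}(x)\leq y\iff x\leq y\iff x\leq\epsilon_{b_1}^{b_0}(y)$ for $x\in f^{-1}(b_0)$, $y\in f^{-1}(b_1)$, so $\eta_{b_0}^{b_1}\dashv\epsilon_{b_1}^{b_0}$ and in particular $\eta_{b_0}^{b_1}$ preserves all suprema. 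Given a commutative square whose top is $\phi:A\to X$, whose left leg is an embedding that we treat as an inclusion $A\subseteq C$, and whose bottom is $\psi:C\to B$, I would define the diagonal by
$$s(c)=\bigvee\{\,\eta_{\psi(a)}^{\psi(c)}(\phi(a))\mid a\in A,\ a\leq c\,\},$$
the supremum taken in the complete poset $f^{-1}(\psi(c))$. By construction $f(s(c))=\psi(c)$, so $fs=\psi$; and for $a_0\in A$ the term with $a=a_0$ contributes $\phi(a_0)$ while every other term $\eta_{\psi(a)}^{\psi(a_0)}(\phi(a))$ is $\leq\phi(a_0)$ (since $\phi(a_0)$ lies in the fiber over $\psi(a_0)$ and dominates $\phi(a)$), so $s|_A=\phi$.

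The step I expect to be the main obstacle is monotonicity of $s$, which is exactly where the fibration and cofibration structures must cohere with one another. For $c\leq c'$, monotonicity amounts to $\eta_{\psi(c)}^{\psi(c')}(s(c))\leq s(c')$, and since $\eta_{\psi(c)}^{\psi(c')}$ preserves suprema this reduces, term by term, to the \emph{functoriality} of the transport maps, $\eta_{b_1}^{b_2}\circ\eta_{b_0}^{b_1}=\eta_{b_0}^{b_2}$ for $b_0\leq b_1\leq b_2$ --- equivalently, to the interpolation statement that whenever $y\leq z$ in $X$ with $f(y)\leq b_0\leq f(z)$ there is some $w\in f^{-1}(b_0)$ with $y\leq w\leq z$. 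This is the genuine content of $f$ being a bifibration beyond the bare pointwise existence of lifts, and extracting it from (I) and (II) (via a careful use of completeness and the adjunction $\eta\dashv\epsilon$) is, I expect, the delicate heart of the argument. Once it is in hand, $s$ is monotone, hence the sought diagonal, and the ``if'' direction --- and with it the theorem --- is complete.
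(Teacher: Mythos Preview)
The paper does not prove this theorem at all; it is merely recalled with a citation to Tholen~\cite{tho}. So there is no ``paper's proof'' to compare against, and your outline is in fact the standard route to the result.

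That said, the obstacle you single out is real, and it cannot be overcome from the hypotheses \emph{as parenthetically stated}. The condition ``$\{x'\in f^{-1}(b)\mid x\leq x'\}$ has a minimum'' is strictly weaker than asking that the lift $x\leq x_b$ be \emph{opcartesian} (i.e.\ that $x_b\leq y$ whenever $x\leq y$ and $b\leq f(y)$), and it is only the latter that forces the transport maps to compose functorially. Here is a concrete failure. Take $B=\{0<1<2\}$ and let $X$ have fibres $f^{-1}(0)=\{a_0<a_1\}$, $f^{-1}(1)=\{p_0<p_1\}$, $f^{-1}(2)=\{c<d\}$, with the cross-relations generated by
\[
a_0<p_0,\quad a_1<p_1,\quad a_1<c,\quad p_0<c,\quad p_1<d.
\]
Each fibre is a two-element chain, hence complete, and one checks directly that every set $\{x'\in f^{-1}(b)\mid x\leq x'\}$ has a minimum and every $\{x'\in f^{-1}(b)\mid x'\leq x\}$ a maximum, so (I) and (II) in the displayed sense both hold. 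Yet $\eta_0^1(a_1)=p_1$ and $\eta_1^2(p_1)=d$, while $\eta_0^2(a_1)=c$, so functoriality fails; and indeed $f$ is \emph{not} $Emb$-injective, since the square built from the embedding $\{a_1<c\}\hookrightarrow\{a_1<m<c\}$ with $m\mapsto 1$ admits no diagonal (there is no element of $f^{-1}(1)$ between $a_1$ and $c$).

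Thus the parenthetical gloss in the statement is too weak: with ``fibration'' read as in Grothendieck (opcartesian lifts), functoriality of $\eta$ is immediate from the universal property of the lift, the interpolation property you isolate follows, and your construction of the diagonal $s$ then goes through verbatim. Your instinct that this step is ``the delicate heart of the argument'' is exactly right---it is precisely where one needs the stronger hypothesis that Tholen actually uses.
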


The category of {\bf Pos}-$S$ is cartesian closed (see
\cite{F.M}). So by Theorem 1.2 from \cite{C.M} we have:

\begin{theorem}\label{characterizing of inj in slice cat}
Let $S$ be a pomonoid. Then $f: X\to B$ is a regular injective
object in {\bf Pos}-$S/B$ if and only if the following
two conditions are satisfied:\\
$(1)$ $\langle1_{X}, f\rangle: f\rightarrow\pi_B^{X}$ is a section
in {\bf Pos}-$S/B$ where $\pi_B^X :
X\times B\to B$ is the second projection.\\
$(2)$ The object $S_B(f)$ of sections of $f$ is a regular
injective object in {\bf Pos}-$S$.
\end{theorem}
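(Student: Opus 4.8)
The plan is to obtain the theorem as a direct instance of Theorem 1.2 of \cite{C.M}, which supplies precisely this two-condition characterization of the regular injective objects of a slice category $\mathbf{C}/B$ whenever $\mathbf{C}$ is a cartesian closed category with finite limits. Consequently, nearly all the work lies in verifying that {\bf Pos}-$S$ meets the hypotheses of that theorem and in transcribing its conclusion into the present notation.

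First I would assemble the ambient facts about {\bf Pos}-$S$. The category {\bf Pos}-$S$ has all finite limits and is cartesian closed (see \cite{F.M}); in particular, for $S$-posets $X$ and $B$ the exponential $X^{B}$ exists, together with the transpose bijection $\hom(Z\times B,X)\cong\hom(Z,X^{B})$ natural in $Z$. I would also invoke the remark made just before Theorem~\ref{enough Pos-S/B}: the regular monomorphisms of {\bf Pos}-$S$ are exactly the order-embeddings, and a morphism of {\bf Pos}-$S/B$ is a regular monomorphism there if and only if its underlying {\bf Pos}-$S$-morphism is one. Hence ``$Emb$-injective'' and ``regular injective'' coincide both in {\bf Pos}-$S$ and in {\bf Pos}-$S/B$, and regular monomorphisms are stable under pullback.

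Next I would make the object of sections explicit. For $f\colon X\to B$ the $S$-poset $S_{B}(f)$ is the pullback of $f^{B}\colon X^{B}\to B^{B}$ along the name $\lceil 1_{B}\rceil\colon\mathbf{1}\to B^{B}$ of the identity of $B$; concretely, it is the sub-$S$-poset of $X^{B}$ consisting of those $S$-poset maps $g\colon B\to X$ with $fg=1_{B}$, equipped with the order and action inherited from $X^{B}$. Observing in addition that the canonical comparison morphism from $f$ to $\pi_{B}^{X}$ in {\bf Pos}-$S/B$ is the map $\langle 1_{X},f\rangle\colon X\to X\times B$, Theorem 1.2 of \cite{C.M}, applied with $\mathbf{C}$ the category {\bf Pos}-$S$, asserts exactly that $f$ is regular injective in {\bf Pos}-$S/B$ if and only if $\langle 1_{X},f\rangle$ is a split monomorphism, i.e.\ a section, in {\bf Pos}-$S/B$ (condition (1)) and $S_{B}(f)$ is regular injective in {\bf Pos}-$S$ (condition (2)). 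This is the asserted equivalence.

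The step I expect to be the genuine obstacle is not the logical deduction, which is immediate once everything is in place, but the verification of the categorical prerequisites of \cite[Thm.~1.2]{C.M} in {\bf Pos}-$S$: that {\bf Pos}-$S$ really is cartesian closed, with a sufficiently explicit description of $X^{B}$ and of the transpose to identify $S_{B}(f)$ and the morphism $\langle 1_{X},f\rangle$ concretely; that regular monomorphisms are pullback-stable and agree with embeddings; and that the comparison morphism featuring in condition (1) is indeed the one displayed in the statement. Once these are confirmed, conditions (1) and (2) are verified for a concrete $f$ by routine diagram chases, and nothing further is needed.
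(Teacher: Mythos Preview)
Your proposal is correct and follows exactly the paper's approach: the paper simply notes that {\bf Pos}-$S$ is cartesian closed (citing \cite{F.M}) and then invokes Theorem~1.2 of \cite{C.M} to obtain the statement, without further argument. Your write-up is in fact more detailed than the paper's one-line justification; the only caveat is that in {\bf Pos}-$S$ the exponential $X^{B}$ is $\hom(S\times B,X)$ rather than $\hom(B,X)$, so your concrete description of $S_{B}(f)$ should be adjusted accordingly (cf.\ Remark~\ref{S_B(f)}).
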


Now, we supply a partial answer to the characterization of regular
injectivity in the category {\bf Pos}-$S/B$ in a special case,
when the $S$-poset $B$ has the trivial action.
\begin{remark}[\cite{FF.M}]\label{S_B(f)}
{\em For a pomonoid $S$, we know that {\bf Pos}-$S$ is cartesian
closed. Indeed, given two $S$-posets $A$ and $B$ the exponential
$B^A$ is given by $B^A=\hom(S\times A , B)$, the set of all
$S$-poset maps from the product $S$-poset $S\times A$ to $B$. Note
that the action on $S\times A$ operates on both components. This
set is an $S$-poset, with pointwise order and the action is given
by $(fs)(t,a)=f(st,a)$ (see~\cite{F.M,M.T}). Now, given $f:
X\rightarrow B$ in {\bf Pos}-$S/B$ we have
\begin{center}
$S_B(f)=\{h\in \hom(S\times B, X) \ \mid \  fh=\pi_B^S\}.$
\end{center}
Also, we consider the following embedding induced by fibres of
$f$:
\begin{center}
$m: S(f)\rightarrowtail\prod_{b\in B} f^{-1}(b)~~~{\rm with}~~~
m(h)=(h(b,s))_{{s\in S},~{b\in B}}$
\end{center}
and recall the following result from \cite{FF.M}.}
\end{remark}
\begin{proposition}\label{r.inj and fibre}
Let $S$ be a pomonoid. If $f: X\to B$ is a regular injective
object in the category {\bf Pos}-$S/B$, then:\\
$(1)~\langle1_{X}, f\rangle: X\to X\times B$ is a section in {\bf Pos}-$S/B$.\\
$(2)$ For every $b\in B$, the sub $S$-poset $f^{-1}(b)$ of $X$ is
regular injective object in {\bf Pos}-$S$, so it is a complete
poset.
\end{proposition}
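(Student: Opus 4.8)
The plan is to reduce each clause to a previously established result. For part $(1)$, notice that this is exactly condition $(1)$ of Theorem \ref{characterizing of inj in slice cat} once we recognize that a section of $\pi_B^X$ in {\bf Pos}-$S/B$ is the same datum as a section of $\langle 1_X, f\rangle: X \to X\times B$. So if $f$ is regular injective in {\bf Pos}-$S/B$, Theorem \ref{characterizing of inj in slice cat}$(1)$ gives a morphism $r: X\times B \to X$ over $B$ with $r\circ\langle 1_X, f\rangle = 1_X$; this $r$ is the required retraction, so $\langle 1_X, f\rangle$ is a section in {\bf Pos}-$S/B$. First I would spell out this translation carefully, since the two formulations differ only in which leg of the triangle is named.

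For part $(2)$, I would argue fibrewise. Fix $b\in B$ and let $j: f^{-1}(b) \rightarrowtail X$ be the inclusion, which is a regular monomorphism (order-embedding) in {\bf Pos}-$S$; here is where the hypothesis that $B$ has trivial action is used, so that $f^{-1}(b)$ is genuinely a sub $S$-poset of $X$ (the action cannot move a point out of its fibre). To see $f^{-1}(b)$ is regular injective in {\bf Pos}-$S$, take any order-embedding $m: U \rightarrowtail V$ in {\bf Pos}-$S$ and a map $u: U \to f^{-1}(b)$. Form the {\bf Pos}-$S/B$ square whose top edge is $j\circ u: U \to X$, whose left edge is $m$, whose right edge is $f: X\to B$, and whose bottom edge is the constant map $V \to B$ at $b$ (constant maps are $S$-poset maps precisely because $B$ has trivial action). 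Since $f$ is $Emb$-injective in {\bf Pos}-$S/B$ (regular monomorphisms in {\bf Pos}-$S/B$ being exactly the order-embeddings of {\bf Pos}-$S$, as recalled before Theorem \ref{enough Pos-S/B}), there is a diagonal $s: V \to X$ with $s\circ m = j\circ u$ and $f\circ s = \text{(constant at }b)$. The second equation forces $s$ to factor through $f^{-1}(b)$, yielding the desired extension $\bar{s}: V \to f^{-1}(b)$ with $\bar{s}\circ m = u$. Hence $f^{-1}(b)$ is $Emb$-injective, i.e. regular injective, in {\bf Pos}-$S$.

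Finally, to conclude $f^{-1}(b)$ is a complete poset, I would invoke the known fact that the forgetful functor {\bf Pos}-$S \to$ {\bf Pos} sends regular injectives to regular injectives (or simply note that a regular injective $S$-poset, tested against embeddings of posets with trivial action, is a regular injective poset), together with the classical result that the regular injective objects in {\bf Pos} are exactly the complete posets. I expect the main obstacle to be the careful bookkeeping around the trivial-action hypothesis: one must check at each step that the maps being built (the constant map $V\to B$, the corestriction $\bar s$) are legitimate $S$-poset morphisms, and this is precisely what the triviality of the action on $B$ guarantees. The rest is routine diagram manipulation once the translation between the slice-category formulations of ``section'' and ``injective'' is made explicit.
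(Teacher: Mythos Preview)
The paper does not actually prove this proposition here: it is explicitly recalled from the authors' earlier work~\cite{FF.M} (see the sentence ``and recall the following result from~\cite{FF.M}'' immediately preceding the statement). So there is no in-paper argument to compare your proposal against.

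That said, your proposal is correct and is the natural proof. Part~(1) is, as you observe, literally condition~(1) of Theorem~\ref{characterizing of inj in slice cat}, so nothing more is needed. Your argument for part~(2) is the standard fibrewise lifting: the constant map $V\to B$ at $b$ and the inclusion $f^{-1}(b)\hookrightarrow X$ are $S$-poset maps precisely because $B$ carries the trivial action, and then regular injectivity of $f$ in {\bf Pos}-$S/B$ supplies the diagonal, which corestricts to $f^{-1}(b)$. You are right to flag the trivial-action hypothesis: it is not restated in the proposition itself but is declared a standing assumption in the paragraph before Remark~\ref{S_B(f)}, and without it $f^{-1}(b)$ need not even be a sub $S$-poset. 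For the final clause (completeness), rather than sketching a forgetful-functor argument you can simply cite~\cite{E.M.R}, which is exactly the result that regular injective $S$-posets are complete; the paper already lists this reference.
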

In this section, we are going to give a new characterization of
injective objects in {\bf Pos}-$S/B$ that removes condition (1) of
the above proposition.
\begin{proposition}\label{section}
Let $S$ be a pomonoid and $f : X\to B$ be a $S$-poset map. If
$\langle 1_{X}, f\rangle: X\to X\times B$ is a section in {\bf
Pos}-$S/B$ then for $x\in X$ and $b\in B$ with $f(x)\leq b$,
$\{x'\in f^{-1}(b)~|~x\leq x'\}$ has a minimum element.
\end{proposition}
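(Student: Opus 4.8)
The plan is to unpack what it means for $\langle 1_X, f\rangle : X \to X\times B$ to be a section in {\bf Pos}-$S/B$ and extract the desired minimum from the retraction. A section in the slice means there is an $S$-poset map $r : X\times B \to X$ with $f\circ r = \pi_B^X$ (so $r$ lies over $B$) and $r\circ\langle 1_X, f\rangle = 1_X$, i.e. $r(x, f(x)) = x$ for all $x\in X$. The first thing I would note is that, because $r$ is over $B$, for any $(x,b)\in X\times B$ we have $f(r(x,b)) = b$, so $r(x,b) \in f^{-1}(b)$ automatically; and because $r$ is monotone and $B$ has the trivial action (so $\langle 1_X,f\rangle$ and $r$ are genuinely constrained only by the poset structure on the $B$-coordinate), $r$ is monotone in each variable separately.

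Given $x\in X$ and $b\in B$ with $f(x)\le b$, the natural candidate for the minimum of $\{x'\in f^{-1}(b) \mid x\le x'\}$ is $x_0 := r(x,b)$. I would verify the three required properties in turn. First, $x_0\in f^{-1}(b)$ as observed above. Second, $x\le x_0$: since $f(x)\le b$ we have $(x, f(x)) \le (x,b)$ in $X\times B$, and applying the monotone map $r$ gives $x = r(x,f(x)) \le r(x,b) = x_0$. Third, and this is the crux, $x_0$ is a \emph{lower} bound for the set: if $x'\in f^{-1}(b)$ with $x\le x'$, then in $X\times B$ we have $(x,b) \le (x', b)$ (using $x\le x'$ in the first coordinate and $b = f(x') \le f(x')$ — actually $b\le b$ — in the second), so $x_0 = r(x,b) \le r(x', f(x')) = r(x', b)$; but $r(x',f(x')) = x'$ by the section identity, hence $x_0 \le x'$. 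So $x_0$ is the minimum.

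The step I expect to be the only real subtlety is making sure the section identity $r\circ\langle 1_X,f\rangle = 1_X$ is being applied at exactly the right argument — in the last step one must recognize that $(x',b)$ equals $(x', f(x'))$ \emph{because} $x'\in f^{-1}(b)$, i.e. $f(x') = b$, which is what licenses replacing $r(x',b)$ by $x'$. Everything else is a direct consequence of monotonicity of $r$ and the definition of the product order on $X\times B$; the trivial action on $B$ plays no essential role beyond guaranteeing that $X\times B$ is the relevant object and that $r$ is a morphism in {\bf Pos}-$S/B$ in the first place. I would also remark that this argument shows $x_0$ depends only on $x$ and $b$, which dovetails with the ``fibration'' half of condition (II) in Theorem~\ref{fibrewise Pos}; the dual (cofibration) statement would presumably be obtained in a companion result by a symmetric argument, but that is outside the scope of this proposition.
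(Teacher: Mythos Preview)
Your proof is correct and follows exactly the same approach as the paper: take the retraction $r$ of $\langle 1_X,f\rangle$ over $B$, set the candidate minimum to be $r(x,b)$, and verify membership in the set and minimality using monotonicity of $r$ together with the section identity $r(x',f(x'))=x'$. Your write-up is in fact more careful than the paper's, since you make explicit why $r(x,b)\in f^{-1}(b)$ (from $f\circ r=\pi_B^X$) and why $r(x',b)=x'$ (from $f(x')=b$), points the paper leaves implicit.
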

\begin{proof}
Let $r: X\times B\to X$ be a retraction of $\langle 1_{X},
f\rangle$ over B. For $x\in X$ and $b\in B$ with $f(x)\leq b$, let
$r(x, b)=x_b$ so we have
$$x=r(x, f(x))\leq r(x, b)=x_b$$ Also, take $x'$ in $f^{-1}(b)$ with
$x\leq x'$ then
$$x_b=r(x, b)\leq r(x', b)=x'$$
This means that $x_b$ is minimum in $\{x'\in f^{-1}(b)| x\leq
x'\}$.
\end{proof}
\begin{corollary}\label{fibrewise}
Let $S$ be a pomonoid and $f : X\to B$ be a regular injective
$S$-poset map. Then\\
{\rm (i)} For every $b\in B$, the sub $S$-poset $f^{-1}(b)$ of $X$
is regular injective object in {\bf Pos}-$S$, so it is a complete
poset.\\
{\rm (ii)} For $x\in X$ and $b\in B$ with $f(x)\leq b$, $\{x'\in
f^{-1}(b)~|~x\leq x'\}$ has a minimum element $x_b$ (also we have
the dual of this fact).
\end{corollary}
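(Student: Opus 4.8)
The plan is to prove Corollary~\ref{fibrewise} as a direct consequence of the results just established, so the work is mostly bookkeeping rather than new ideas. Assume $f: X\to B$ is a regular injective object in {\bf Pos}-$S/B$. Part (i) is literally the statement of Proposition~\ref{r.inj and fibre}(2), so I would simply invoke that proposition: each fibre $f^{-1}(b)$ is a regular injective object in {\bf Pos}-$S$, and since regular injective $S$-posets are in particular complete posets (a fact recorded already in Proposition~\ref{r.inj and fibre}(2) and traceable to the completeness of regular injective objects in {\bf Pos}), we conclude $f^{-1}(b)$ is complete. Nothing new is needed here.

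For part (ii), the plan is to chain Proposition~\ref{r.inj and fibre}(1) with Proposition~\ref{section}. By Proposition~\ref{r.inj and fibre}(1), the morphism $\langle 1_X, f\rangle: X\to X\times B$ is a section in {\bf Pos}-$S/B$. This is exactly the hypothesis of Proposition~\ref{section}, whose conclusion is precisely that for every $x\in X$ and $b\in B$ with $f(x)\leq b$, the set $\{x'\in f^{-1}(b)\mid x\leq x'\}$ has a minimum element $x_b$. So part (ii) follows immediately. For the parenthetical remark about the dual statement, I would note that the entire argument dualizes: replacing $\leq$ by $\geq$ throughout (equivalently, passing to the opposite order on $X$ and $B$, under which regular injectivity in {\bf Pos}-$S/B$ is preserved because the defining conditions are self-dual), Proposition~\ref{section} yields that $\{x'\in f^{-1}(b)\mid x'\leq x\}$ has a maximum element whenever $b\leq f(x)$; that is, $f$ is also a cofibration in the fibrewise sense.

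I do not anticipate any genuine obstacle, since this corollary is assembled entirely from Proposition~\ref{r.inj and fibre} and Proposition~\ref{section}. The only point requiring a word of care is the dual claim: one must check that the hypotheses feeding Proposition~\ref{section} are themselves self-dual, i.e.\ that $\langle 1_X, f\rangle$ being a section in {\bf Pos}-$S/B$ is a condition insensitive to reversing the order. This holds because a section is defined by the existence of a retraction $r$ with $r\circ\langle 1_X,f\rangle = 1_X$ over $B$, a purely equational condition that makes no reference to the order, and the order-reversed $S$-poset $X^{\mathrm{op}}$ carries a compatible $S$-action precisely when $X$ does (the action map stays order-preserving after reversing order on both $X$ and $X\times S$). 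Hence the retraction $r$ for $X$ serves equally as a retraction for $X^{\mathrm{op}}$, and the dual of Proposition~\ref{section} applies verbatim.

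\begin{proof}
(i) By Proposition~\ref{r.inj and fibre}(2), for every $b\in B$ the sub $S$-poset $f^{-1}(b)$ of $X$ is a regular injective object in {\bf Pos}-$S$, and every such object is a complete poset.

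(ii) By Proposition~\ref{r.inj and fibre}(1), $\langle 1_X, f\rangle: X\to X\times B$ is a section in {\bf Pos}-$S/B$. Applying Proposition~\ref{section} with this $f$, we obtain that for $x\in X$ and $b\in B$ with $f(x)\leq b$, the set $\{x'\in f^{-1}(b)\mid x\leq x'\}$ has a minimum element $x_b$. Reversing the partial orders on $X$ and $B$ turns a regular injective object of {\bf Pos}-$S/B$ into a regular injective object of the corresponding category built on the opposite orders, since the relevant conditions are equational in the $S$-action and self-dual in the order; thus the same argument applied to $X^{\mathrm{op}}$ and $B^{\mathrm{op}}$ gives the dual assertion that $\{x'\in f^{-1}(b)\mid x'\leq x\}$ has a maximum element whenever $b\leq f(x)$.
\end{proof}
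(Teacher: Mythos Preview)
Your proof is correct and follows essentially the same approach as the paper: part (i) is read off from Proposition~\ref{r.inj and fibre}(2), and part (ii) is obtained by feeding Proposition~\ref{r.inj and fibre}(1) into Proposition~\ref{section}. The paper's proof is even terser, simply citing these two propositions without spelling out the dual argument.
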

\begin{proof}
Applying Proposition \ref{r.inj and fibre} and the above
proposition we get the result.
\end{proof}

Now, we consider the category {\bf Pos}-$S$ as a sub category of
{\bf Cat}. On the other words, every $S$-poset is a category as a
poset and all action-preserving monotone maps are functors. So we
get the following result.

\begin{proposition}
Every regular injective object in {\bf Pos}-$S/B$ is a topological
$S$-poset map, considered as a functor.
\end{proposition}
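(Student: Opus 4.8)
The plan is to reduce the statement to the characterisation of topological monotone maps in {\bf Pos} provided by Theorem~\ref{fibrewise Pos} together with the weak factorization system $(Emb,Top)$ of Example~\ref{6}(3). Recall that every $S$-poset is in particular a poset, hence a category, and every action-preserving monotone map is a functor; so for $f:X\to B$ a regular injective object of {\bf Pos}-$S/B$, the assertion that $f$ is a topological $S$-poset map ``considered as a functor'' means exactly that its underlying monotone map lies in the class $Top$ of topological monotone maps of {\bf Pos}.

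First I would extract the fibrewise data. By Corollary~\ref{fibrewise} applied to the regular injective $f$: (i) for every $b\in B$ the fibre $f^{-1}(b)$ is a regular injective object of {\bf Pos}-$S$, and hence a complete poset; and (ii) for all $x\in X$ and $b\in B$ with $f(x)\le b$ the set $\{x'\in f^{-1}(b)\mid x\le x'\}$ has a least element, and dually the analogous set has a greatest element whenever $b\le f(x)$. Now (i) is precisely condition (I) of Theorem~\ref{fibrewise Pos}, and (ii) says that $f$, viewed as a monotone map, is at once a fibration and a cofibration, which is condition (II). The essential observation here is that both conditions are purely order-theoretic: they speak only of completeness of the fibres and of the existence of certain minima and maxima, so they are statements about the underlying monotone map of $f$ in {\bf Pos}/$B$ and require no $S$-equivariance.

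Next I would invoke Theorem~\ref{fibrewise Pos} in the direction that conditions (I) and (II) together force $Emb$-injectivity: the underlying monotone map of $f$ is an $Emb$-injective object of {\bf Pos}/$B$, that is, it lies in $Emb^{\Box}$ (using the fact recorded after the definition of $\Box$ that $r\in\mathcal{H}^{\Box}$ if and only if $r$ is an $\mathcal{H}_B$-injective object of $\mathbf{C}/B$). By Example~\ref{6}(3), $(Emb,Top)$ is a weak factorization system in {\bf Pos}, so $Top=Emb^{\Box}$; hence the underlying monotone map of $f$ belongs to $Top$, i.e.\ it is topological. Reading the posets as categories and the monotone map as a functor, this is exactly the statement of the proposition.

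The main obstacle is conceptual rather than computational: one must make sure that the passage from ``regular injective in {\bf Pos}-$S/B$'' --- a property tested only against $S$-equivariant order-embeddings --- to ``$Emb$-injective in the underlying {\bf Pos}/$B$'' --- which is tested against all order-embeddings of plain posets --- is legitimate. This is precisely why the argument is routed through the explicit characterisations: Corollary~\ref{fibrewise} delivers exactly the order-theoretic data (complete fibres, fibration, cofibration) that Theorem~\ref{fibrewise Pos} recognises as equivalent to $Emb$-injectivity in {\bf Pos}/$B$, so the reduction is forced once those two results are in hand. A minor additional point is to check that ``topological monotone map'' in the sense of Example~\ref{6}(3) agrees with ``topological functor'' in the sense of the Definition; for posets regarded as categories this is immediate once one unwinds the relevant $G$-structured sources.
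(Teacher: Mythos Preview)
Your proposal is correct and follows essentially the same route as the paper: apply Corollary~\ref{fibrewise} to obtain complete fibres together with the fibration/cofibration property, feed these into Theorem~\ref{fibrewise Pos} to deduce that the underlying monotone map is $Emb$-injective in ${\bf Pos}/B$, and then use $(Emb)^{\Box}=Top$ from Example~\ref{6}(3). Your write-up is more explicit than the paper's about why the passage from {\bf Pos}-$S/B$ to ${\bf Pos}/B$ is legitimate, but the argument is the same.
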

\begin{proof}
First by Corollary \ref{fibrewise} and Theorem \ref{fibrewise
Pos}, one conclude that every regular injective object in {\bf
Pos}-$S/B$ is a regular injective object in ${\bf Pos}/B$. Then
part (3) from Example \ref{6}, says $(Emb)^\Box=Top$, as we
required.
\end{proof}
Next, we try to prove the converse of this fact.
\begin{proposition}\label{adjoint}
The functor $G_B: {\bf Pos}/B\to$ {\bf Pos}-$S/B$ that equips
monotone map in ${\bf Pos}/B$ with the trivial action and so a map
in {\bf Pos}-$S/B$, has a left adjoint.
\end{proposition}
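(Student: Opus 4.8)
The plan is to construct the left adjoint $L:\mathbf{Pos}$-$S/B\to\mathbf{Pos}/B$ explicitly as an orbit-quotient and then verify its universal property directly. The guiding observation is that, because $B$ carries the trivial action, any morphism $\phi:g\to G_B(f)$ in $\mathbf{Pos}$-$S/B$ — that is, an $S$-poset map $\phi:X\to A$ over $B$ with $A$ given the trivial action — is forced to be orbit-constant: equivariance yields $\phi(xs)=\phi(x)s=\phi(x)$ for all $x\in X$, $s\in S$. This is the entire content of the adjunction, and it tells us that $L$ must identify each $x$ with all of its translates $xs$.

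Concretely, given $g:X\to B$ in $\mathbf{Pos}$-$S/B$, I would let $\preceq$ be the preorder on $X$ generated by the partial order $\le_X$ together with the identifications $x\sim xs$ (imposed in both directions) for all $x\in X$, $s\in S$, and write $\equiv$ for the induced equivalence ($x\equiv x'$ iff $x\preceq x'$ and $x'\preceq x$). Then I set $L(g)=\bigl(\bar g:X/{\equiv}\to B\bigr)$, with $X/{\equiv}$ carrying the induced partial order and $\bar g$ induced by $g$. Two points need checking: that $\bar g$ is well defined and monotone. Well-definedness is immediate, since $g$ is already orbit-invariant ($g(xs)=g(x)s=g(x)$, as $B$ has trivial action); monotonicity follows by a zigzag argument, because along any chain witnessing $x\preceq x'$ each $\le_X$-step is sent by $g$ to a $\le_B$-relation and each orbit-identification step is sent to an equality. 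A morphism $\phi:g\to g'$ in $\mathbf{Pos}$-$S/B$ is monotone and equivariant, hence respects the generators of $\preceq$, so it descends to a monotone map $L(\phi)$ over $B$; functoriality is then routine.

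For the adjunction, I would take the unit $\eta_g:g\to G_B(L(g))$ to be the quotient map $q:X\to X/{\equiv}$, viewed into the trivial-action $S$-poset $X/{\equiv}$. This is a legitimate morphism of $\mathbf{Pos}$-$S/B$: it is monotone, it satisfies $q(xs)=q(x)$ (so it is equivariant into the trivial action), and $\bar g\circ q=g$. The universal property then reads: given $f:A\to B$ in $\mathbf{Pos}/B$ and any $\phi:g\to G_B(f)$, there is a unique monotone map $\psi:X/{\equiv}\to A$ over $B$ with $\psi\circ q=\phi$. Existence is precisely the orbit-invariance remark above — $\phi$ is orbit-constant and monotone, hence respects $\preceq$ and factors through $q$ — while compatibility over $B$, i.e.\ $f\psi=\bar g$, follows since $f\psi q=f\phi=g=\bar g q$ and $q$ is epic; uniqueness holds because $q$ is surjective. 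Naturality of the resulting bijection $\hom_{\mathbf{Pos}/B}(L(g),f)\cong\hom_{\mathbf{Pos}\text{-}S/B}(g,G_B(f))$ in both variables is straightforward.

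The main obstacle is pinning down the order on the quotient so that the universal property holds on the nose: one must generate a preorder from $\le_X$ \emph{and} the orbit identifications simultaneously (rather than first quotienting the underlying poset and then ordering it, which would be too coarse), and then check that $\bar g$ remains monotone and that an arbitrary orbit-constant monotone $\phi$ descends. Both verifications hinge on the same zigzag lemma, and the interplay between the monotonicity and the equivariance of $\phi$ is the delicate point. As an alternative I note that one could instead invoke Freyd's General Adjoint Functor Theorem — $\mathbf{Pos}/B$ is complete and $G_B$ preserves limits, since the trivial action is compatible with the limits of $\mathbf{Pos}$-$S$, which are computed on underlying posets — but the required solution-set condition would be supplied by exactly the quotients above, so the explicit construction is both shorter and more informative.
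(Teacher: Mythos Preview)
Your proposal is correct and follows essentially the same route as the paper: both construct the left adjoint explicitly as the orbit-poset quotient $X\to X/{\equiv}$ and verify the universal property via the unit (the quotient map), using that any $S$-poset map into a trivial-action target is orbit-constant. The only difference is cosmetic: the paper imports the quotient poset $A/\theta$ from Theorem~12 of \cite{F.M} rather than spelling out the preorder generated by $\le_X$ and the orbit identifications, but this is exactly the construction you wrote down.
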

\begin{proof}
Define the functor $H_B:$ {\bf Pos}-$S/B\to$ {\bf Pos}$/B$ given
by $H_B(h)=\bar{h}: A/\theta\to B$ with $\bar{h}([a])=h(a)$ for
every $h\in$ {\bf Pos}-$S/B$. Where the poset $A/\theta$ was
introduced in Theorem 12 from \cite{F.M} and $\bar{h}$ is a
monotone map. If $g: A\to C$ is an $S$-poset map over $B$, then
$H_B(g): A/\theta \to C/\theta$ defined by $g([a])=[g(a)]$ is a
well-defined monotone map over $B$. The unit of this adjunction
$$
\xymatrix{A\ar[rr]^{\eta_f}\ar[dr]_{f}&
&A/\theta\ar[dl]^{G_BH_B(f)}\\&B&}
$$
for an object $f: A\to B$ in {\bf Pos}-$S/B$, is the natural
$S$-poset over $B$. It is a universal arrow to $G_B$ because for a
given $S$-poset map
$$
\xymatrix{A\ar[rr]^{h}\ar[dr]_{f}& &P\ar[dl]^{G_B(l)}\\&B&}
$$
where $l: P\to B$ is a monotone map, we have a unique $S$-poset
map $\bar{h}$ as the following diagram
$$
\xymatrix{A/\theta\ar[rr]^{\bar{h}}\ar[dr]_{G_BH_B(f)}&
&P\ar[dl]^{G_B(l)}\\&B&}
$$
given by $\bar{h}([a])=h(a)$. By similar proof in Theorem 12 from
\cite{F.M} one can prove that $\bar{h}$ is a well defined
$S$-poset map. The above diagram is commutative, since for every
$[a]\in A/\theta$ we have:\\
$G_B(l)(\bar{h}[a])=G_B(l)(h(a))=f(a)=\bar{f}[a]=H_B(f)=G_BH_B(f)$.
\end{proof}
We state the following result from~\cite{B.B2} that will be used
in the sequel.
\begin{lemma}\label{5}
Let $F: \mathcal{C}\to \mathcal{D}$ and $G: \mathcal D\to
\mathcal{C}$ be two functors such that $F\dashv G$. Also, let
$\mathcal{M}$ and $\mathcal{N}$ be certain subclasses of
$\mathcal{C}$ and $\mathcal{D}$, respectively. If for all $f\in
\mathcal{M}$, $Ff\in \mathcal{N}$, then for any
$\mathcal{N}$-injective object $D$ of $\mathcal D$, $GD$ is an
$\mathcal{M}$-injective object of $\mathcal{C}$.
\end{lemma}
\begin{theorem}\label{top functor}
Let $S$ be a pogroup. Then all topological $S$-poset functors are
regular injective as $S$-poset map with trivial action.
\end{theorem}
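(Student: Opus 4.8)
The plan is to combine the adjunction from Proposition~\ref{adjoint} with Lemma~\ref{5}, taking $F = H_B$ and $G = G_B$, with $\mathcal{M} = Emb$ in {\bf Pos}-$S/B$ and $\mathcal{N} = Emb$ in {\bf Pos}$/B$. First I would check the hypothesis of Lemma~\ref{5}: that $H_B$ sends $Emb$-morphisms of {\bf Pos}-$S/B$ to $Emb$-morphisms of {\bf Pos}$/B$. For this I need to recall the construction of the quotient poset $A/\theta$ from Theorem~12 of~\cite{F.M}: if $g : A \rightarrowtail C$ is an order-embedding over $B$, then the induced $\bar g : A/\theta_A \to C/\theta_C$, $[a] \mapsto [g(a)]$, should again be an order-embedding. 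This is where the assumption that $S$ is a \emph{pogroup} enters: when $S$ is a pogroup the congruence $\theta$ has a particularly simple description (two elements are $\theta$-related essentially when they are in the same orbit and comparable appropriately), and one checks that $[g(a)] \le [g(a')]$ in $C/\theta_C$ forces $[a] \le [a']$ in $A/\theta_A$ because $g$ reflects order and the pogroup structure lets one cancel the group element. I expect this verification to be the main obstacle, since it requires unwinding the precise definition of $\theta$ and using invertibility of elements of $S$ in an essential way; everything else is formal.

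Granting that $H_B$ preserves embeddings, Lemma~\ref{5} then says: for every $Emb$-injective object $D$ of {\bf Pos}$/B$, the object $G_B(D)$ is $Emb$-injective in {\bf Pos}-$S/B$. Now let $t : X \to B$ be a topological $S$-poset functor. Forgetting the action, $t$ is a topological monotone map, hence by part~(3) of Example~\ref{6} (which identifies $(Emb)^\Box = Top$ in {\bf Pos}$/B$) it is an $Emb$-injective object of {\bf Pos}$/B$. Applying the previous paragraph, $G_B(t)$ is $Emb$-injective in {\bf Pos}-$S/B$. But $G_B$ is precisely the functor that equips a monotone map over $B$ with the trivial $S$-action, so $G_B(t)$ is exactly $t$ viewed as an $S$-poset map with trivial action. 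Since regular monomorphisms in {\bf Pos}-$S/B$ coincide with order-embeddings (as noted in the excerpt, cf.~\cite{F.M,FF.M}), $Emb$-injectivity in {\bf Pos}-$S/B$ is the same as regular injectivity there, and we conclude that $t$, with trivial action, is a regular injective object of {\bf Pos}-$S/B$, as claimed.

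I would present the argument in exactly this order: (i) quote the adjunction $H_B \dashv G_B$; (ii) verify $H_B(Emb) \subseteq Emb$ using the pogroup hypothesis and the description of $A/\theta$; (iii) invoke Example~\ref{6}(3) to see that a topological monotone map is $Emb$-injective in {\bf Pos}$/B$; (iv) apply Lemma~\ref{5}; (v) identify $G_B(t)$ with $t$-plus-trivial-action and translate $Emb$-injectivity into regular injectivity. The only genuinely technical point is step~(ii); the remaining steps are one-line invocations of results already recorded in the excerpt.
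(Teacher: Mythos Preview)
Your proposal is correct and follows essentially the same route as the paper: establish that $H_B$ preserves order-embeddings (the paper appeals to an analogue of Theorem~4.6 in \cite{E.M.R} rather than unwinding $\theta$ directly, but this is the same step and is likewise where the pogroup hypothesis is used), then combine the adjunction of Proposition~\ref{adjoint} with Lemma~\ref{5} and Example~\ref{6}(3) to conclude. The order and emphasis differ only cosmetically.
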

\begin{proof}
By similar proof of the Theorem 4.6 in \cite{E.M.R}, we can show
that the functor $H_B:$ {\bf Pos}-$S/B\to$ {\bf Pos}$/B$ preserves
order-embeddings, these are the regular monomorphisms in two
category {\bf Pos}-$S/B$ and {\bf Pos}-$S$. Therefore, by Lemma
\ref{5} and the adjunction Proposition \ref{adjoint}, the functor
$G_B:$ {\bf Pos}$/B\to$ {\bf Pos}-$S/B$ preserves regular
injective objects. Since, part (3) from the Example \ref{6} says
$(Emb)^\Box=Top$ so we get the result.
\end{proof}
\begin{corollary}
Let $S$ be a pogroup. Then $(Emb, Top)$ is a weak factorization
system for $S$-posets with trivial action.
\end{corollary}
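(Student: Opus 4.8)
The plan is to deduce the corollary from a single identification of morphism classes, $Emb^{\Box}=Top$, together with the weak factorization system $(Emb,Emb^{\Box})$ that was already produced from Proposition~\ref{wfs&enough injective} and Theorem~\ref{enough Pos-S/B}. So almost everything will be a matter of assembling results proved above; the only real work is a careful reading of what ``for $S$-posets with trivial action'' has to mean here.

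First I would set up the dictionary between the two descriptions of the intended right-hand class. A morphism $r\colon A\to B$ lies in $Emb^{\Box}$ exactly when $r$, viewed as an object of the slice {\bf Pos}-$S/B$, is $Emb_{B}$-injective, i.e.\ regular injective there (using that the regular monomorphisms of {\bf Pos}-$S/B$ are precisely the order-embeddings). This turns membership in $Emb^{\Box}$ into the fibrewise regular-injectivity condition in which the auxiliary results of Section~3 are phrased, and makes the two needed inclusions directly available.

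For $Emb^{\Box}\subseteq Top$: if $r\in Emb^{\Box}$ then $r$ is a regular injective object of {\bf Pos}-$S/B$, hence by the proposition asserting that every regular injective object of {\bf Pos}-$S/B$ is a topological $S$-poset map, $r\in Top$; this direction needs no assumption on $S$. For the reverse inclusion $Top\subseteq Emb^{\Box}$ I would cite Theorem~\ref{top functor}: when $S$ is a pogroup, every topological $S$-poset functor is regular injective as an $S$-poset map with trivial action, hence lies in $Emb^{\Box}$. Combining the two inclusions gives $Emb^{\Box}=Top$ on the (full) subcategory of trivial-action $S$-posets.

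The hard part will be the bookkeeping needed to conclude that a genuine weak factorization system results on that subcategory, rather than merely on all of {\bf Pos}-$S$: Theorem~\ref{top functor} supplies $Top\subseteq Emb^{\Box}$ only there, and the $Emb$/$Emb^{\Box}$-factorization built after Theorem~\ref{enough Pos-S/B} (through $\bar{A}^{(S)}\times B$) need not carry the trivial action. To finish I would therefore verify the three conditions of Proposition~\ref{Equi wfs} for the pair $(Emb,Top)$ restricted to trivial-action $S$-posets: the $(Emb,Top)$-factorization of a monotone map and the required lifting property are inherited from Example~\ref{6}(3), where $(Emb,Top)$ is already a weak factorization system in {\bf Pos} (the trivial-action subcategory of {\bf Pos}-$S$ being, for morphism purposes, just {\bf Pos}), while closure of $Emb$ and of $Top$ under retracts is immediate. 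With these checks in place the identification $Emb^{\Box}=Top$ upgrades $(Emb,Emb^{\Box})$ to $(Emb,Top)$ on trivial-action $S$-posets, which is the claim.
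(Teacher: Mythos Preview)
Your proposal is correct and follows essentially the same route as the paper. The paper's own proof is a single sentence that only addresses the factorization condition, noting that any $S$-poset map $f\colon X\to B$ with trivial action has an $(Emb,Top)$-factorization borrowed from {\bf Pos} (citing Corollary~2.7 of \cite{adam2}, which is exactly your Example~\ref{6}(3)); the remaining wfs conditions are left implicit, to be read off the preceding results just as you do. Your write-up simply makes explicit the identification $Emb^{\Box}=Top$ via the two inclusions and checks the conditions of Proposition~\ref{Equi wfs}, which is more careful than the paper but not a different argument.
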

\begin{proof}
Every $S$-poset map $f: X\to B$ has a $(Emb, Top)$-factorization
$f: X\to Y\to B$ as a monotone map in {\bf Pos} (see Corollary 2.7
in \cite{adam2}) where $Y$ is an $S$-poset with trivial action.
\end{proof}

\end{document}